\newtheorem{thm}{Theorem}[section]
\newtheorem{defi}[thm]{Definition}
\newtheorem{prop}[thm]{Proposition}
\newtheorem{lemme}[thm]{Lemma}
\newtheorem{cor}[thm]{Corollary}
\DeclareMathOperator{\Prym}{Prym}
\DeclareMathOperator{\Pic}{Pic}
\DeclareMathOperator{\sign}{sign}
\DeclareMathOperator{\Sing}{Sing}
\DeclareMathOperator{\Div}{Div}
\DeclareMathOperator{\diag}{diag}
\newcommand{\eq}[1][r]
{\ar@<-3pt>@{-}[#1]
\ar@<-1pt>@{}[#1]|<{}="gauche"
\ar@<+0pt>@{}[#1]|-{}="milieu"
\ar@<+1pt>@{}[#1]|>{}="droite"
\ar@/^2pt/@{-}"gauche";"milieu"
\ar@/_2pt/@{-}"milieu";"droite"}
\newcommand{\incl}[1][r]
{\ar@<-0.2pc>@{^(-}[#1] \ar@<+0.2pc>@{-}[#1]}
\begin{document}

\title{\bf Duality for relative Prymians associated to K3 double covers of del Pezzo surfaces of degree 2}

\author{Grégoire \textsc{Menet}}
\maketitle

\begin{abstract}
Markushevich and Tikhomirov provided a construction of an irreducible symplectic V-manifold of dimension 4, the relative compactified Prym variety of a family of curves with involution, which is a Lagrangian fibration with polarization of type (1,2). We give a characterization of the dual Lagrangian fibration. We also identify the moduli space of Lagrangian fibrations of this type and show that the duality defines a rational involution on it.
\end{abstract}

\section*{Introduction}
Irreducible symplectic varieties are defined as compact holomorphically symplectic Kähler varieties with trivial fundamental group, whose symplectic structure is unique up to proportionality.

By the Bogomolov decomposition Theorem \cite{Bogo}, irreducible symplectic varieties play (together with Calabi Yau manifolds and complex tori) a central role in the classification of compact Kähler manifolds with torsion $c_{1}$.

Very few deformation classes of irreducible symplectic varieties are known.
For any positive integer $n$, Beauville exhibited 2 examples of dimension $2n$ in \cite{Beauville}: the Hilbert scheme $X^{[n]}$ parametrizing 0 dimensional subschemes of length $n$ on a K3 surface $X$, and the generalized Kummer variety $K^{n}(T)$ of a 2-dimensional torus $T$, namely the locus in $T^{[n+1]}$ parametrizing the subschemes whose associated cycle sums up to 0 in $T$.

Besides the Beauville examples, there are only two more known constructions of irreducible symplectic varieties up to deformation equivalence, produced by O'Grady in \cite{Grady1} and \cite{Grady2}, and their dimensions are respectively ten and six.

The problem of extending the very short list of known deformation classes of irreducible symplectic varieties is very hard. One can obtain a larger stock of examples if one turns back to the original setting of Fujiki (see \cite{Fujiki}), who considered symplectic V-manifolds. A V-manifold is an algebraic variety with at worst finite quotient singularities. We will say that a V-manifold is symplectic if its nonsingular locus is endowed with an everywhere nondegenerate holomorphic 2-form. We will say moreover that a symplectic V-manifold is irreducible if it is complete, simply connected, and if the holomorphic 2-form is unique up to $\mathbb{C}^*$. Such varieties should appear as factors in the generalized Bogomolov decomposition conjecture (see \cite{Kata} and \cite{Mo}). All the examples of symplectic V-manifolds given by Fujiki, up to deformations of complex structure, are partial resolutions of finite quotients of the products of two symplectic surfaces. Markushevich and Tikhomirov provide in \cite{Markou} a new construction of an irreducible symplectic V-manifold $\mathcal{P}$ of dimension 4, the relative compactified Prym variety of some family of curves with involution. The structure map of the relative compactified Prym variety is a Lagrangian fibration which has a (1,2)-polarized Prym surface as generic fiber.
A natural question is what the dual of this (1,2)-polarized fibration is.

The irreducible symplectic varieties and V-manifolds with a Lagrangian fibration are of particular interest, as they generalize K3 surfaces with elliptic pencils on one hand, and the phase spaces of algebraically integrable systems on the other hand. The problem of constructing the dual of a Lagrangian fibration is discussed in \cite{Sawon}, where an interesting link to the twisted Fourier-Mukai transform is uncovered.

The construction of $\mathcal{P}$ starts from a pair of totally tangent plane quartics $\overline{B_{0}}$ and $\overline{\Delta_{0}}$. The first is used to construct a degree 2 del Pezzo surface $X$, and the second determines a K3 double cover $S$ of $X$.
Then the wanted family of curves is a non-complete linear system of curves on $S$, and $\mathcal{P}$ is its relative compactified Prymian. Permuting the roles of $\overline{B_{0}}$, $\overline{\Delta_{0}}$, we obtain another K3 surface $\widetilde{S}$ and the corresponding Prymian $\widetilde{\mathcal{P}}$.

We will prove that the Lagrangian fibrations on $\mathcal{P}$ and $\widetilde{\mathcal{P}}$ are dual to each other. Moreover we will prove that not only $S\not\simeq\widetilde{S}$
for generic $S$, but also that the derived categories of $\widetilde{S}$, $S$ are non-equivalent and $\widetilde{S}^{[2]}\not\simeq S^{[2]}$. This will allow us to conclude that the associated compactified Prymians $\widetilde{\mathcal{P}}$, $\mathcal{P}$ are non-isomorphic.

In the first section we will recall the construction of Markushevich and Tikhomirov \cite{Markou}. In the second section, we will give a characterization of the dual of a (1,2)-polarized Prym surface following Barth \cite{Barth}. In the third section, we will use results of Yoshikawa \cite{Yoshikawa} on moduli of 2-elementary K3 surfaces which will allow us to identify the moduli space $\mathfrak{P}$ of compactified Prymians $\mathcal{P}$ and to conclude that, in the generic case, the "dual" K3 surfaces $S$, $\widetilde{S}$ are derived equivalent if and only if they are isomorphic. And in the fourth section we will finally prove that the dual Prymian $\widetilde{\mathcal{P}}$ is generically non-isomorphic to $\mathcal{P}$, so that the duality is indeed a non-trivial involution on $\mathfrak{P}$.
\newline

\textbf{Acknowledgement.} I would like to thank Dimitri Markushevich for his help.

\section{Construction of $\mathcal{P}$}\label{Rappels}
In this section we follow \cite{Markou}. We will start by the construction of the family of genus 3 curves with involution, whose relative compactified Prymian $\mathcal{P}$ is the irreducible symplectic V-manifold discussed in the introduction.

Let $\overline{B_{0}}$ be a smooth quartic curve in $\mathbb{P}^{2}$. Let $\mu:X\rightarrow \mathbb{P}^{2}$ be the double cover branched in $\overline{B_{0}}$. Then $X$ is a Del Pezzo surface of degree 2. Let $\overline{\Delta_{0}}$ be a smooth quartic curve in $\mathbb{P}^{2}$
totally tangent to $\overline{B_{0}}$ at eight distinct points. Then the linear pencil $\left\langle \overline{B_{0}}, \overline{\Delta_{0}}\right\rangle$ contains a double conic, so the eight tangency points of $\overline{B_{0}}$, $\overline{\Delta_{0}}$ lie on a conic.
Let $B_{0}=\mu^{-1}(\overline{B_{0}})$. We have $\mu^{-1}(\overline{\Delta_{0}})=\Delta_{0}+i(\Delta_{0})$, where $\Delta_{0}$ is a smooth curve.
By Lemma 5.14 of \cite{Shouhei}, $\Delta_{0}\in|-2K_{X}|$.
Finally, let $\rho:S\rightarrow X$ be the double cover branched in $\Delta_{0}$,  $\Delta=\rho^{-1}(\Delta_{0})$. Note that if we take a similar double cover branched in $i(\Delta_{0})$, $\rho':S'\rightarrow X$, we get a surface $S'$ isomorphic to $S$ (indeed, $i\circ\rho'$ and $\rho$ are two double covers branched in the same curve in $X$).
Denote by $\tau$ the involution of $S$ induced by $\rho$. We have the following diagram:

$$\xymatrix{
 & & B_{0} \ar@{=}[rr] \ar@{^{(}->}[d]& & \overline{B_{0}}\ar@{^{(}->}[d] & & \\
S \ar@(ul,dl)[]_{\tau}\ar[rr]_\rho^{2:1} & & X \ar[rr]_\mu^{2:1} & & \mathbb{P}^{2} & & (1).\\
 \Delta \ar@{=}[rr] \ar@{^{(}->}[u] & & \Delta_{0} \ar@{^{(}->}[u] & & & & }$$

We also allow the case where
$\overline{B_{0}}$ is a quartic and $\overline{\Delta_{0}}$ is equal to a double conic $2Q$ such that $\Delta_{0}=\mu^{-1}(Q)$ is a smooth curve. In this case $i(\Delta_{0})=\Delta_{0}$ and $\Delta_{0}$ is in $|-2K_{X}|$. We will have some additional conditions for matching with \cite{Markou}.
Define also $H=\rho^{*}(-K_{X})$.

The involution $\tau$ of the double cover $\rho: S\rightarrow X$ is $H$-linear and induces an involution on $|H|\simeq \mathbb{P}^{3}$, whose fixed locus consists of two components: a point and a plane. The plane parametrizes the curves of the form $\rho^{-1}\mu^{-1}(t)$, where $t$ is a line in $\mathbb{P}^{2}$. Thus this plane is parametrized by the dual of $\mathbb{P}^{2}$, denoted $\mathbb{P}^{2\vee}$. Let $\epsilon:\mathcal{C}\rightarrow \mathbb{P}^{2\vee}$ be the linear subsystem of $\tau$-invariant curves parametrized by $\mathbb{P}^{2\vee}$. The properties of this linear subsystem must be as in \cite{Markou}.

Then, we need the following conditions. The plane quartic $\overline{B_{0}}$ must have exactly 28 bitangent lines $m_{1},...,m_{28}$. The curve $\mu^{-1}(m_{i})$ is the union of two rational curves $l_{i}\cup l_{i}'$ meeting in 2 points. The 56 curves $l_{i}, l_{i}'$ are all the lines on $X$, that is, curves of degree 1 with respect to $-K_{X}$. Further, the curves $C_{i}=\rho^{-1}(l_{i})$, $C_{i}'=\rho^{-1}(l_{i}')$ are plane conics on $S$ with respect to the injection $S\hookrightarrow \mathbb{P}^{3}$ defined by $|H|$. The conics $C_{i}$, $C_{i}'$ must be irreducible and meeting in exactly 4 distinct points. 

Consequently we require the following conditions for the couple $(\overline{B_{0}},\overline{\Delta_{0}})$. 
\begin{defi}\label{gene}
A pair $(\overline{B_{0}},\overline{\Delta_{0}})$ will be called sufficiently generic if the following conditions are satisfied:
\begin{itemize}
\item 
The quartic $\overline{B_{0}}$ must not have a tangent line with multiplicity 4 in a point. 
In this case $\overline{B_{0}}$ has exactly 28 bitangent lines.
\item
A bitangent line of $\overline{B_{0}}$ tangent at $\overline{B_{0}}$ in a point $p$ must not be tangent at  $\overline{\Delta_{0}}$ in this same point $p$.
In this case, the conics $C_{i}$, $C_{i}'$ are not tangent, so meet in exactly 4 distinct points.
\item
The quartics $\overline{B_{0}}$ and $\overline{\Delta_{0}}$ must not have a common bitangent line.
In this case the conics $C_{i}$ and $C_{i}'$ are irreducible. Moreover $S$ contains no lines.
\end{itemize}
We will denote the set of sufficiently generic pairs $(\overline{B_{0}},\overline{\Delta_{0}})$ by $\mathfrak{L}$.
\end{defi}
Assume for the rest of the section that $(\overline{B_{0}},\overline{\Delta_{0}})\in \mathfrak{L}$.

Let $\mathcal{M}=M_{S}^{H,s}(0,H,-2)$ be the moduli space of semistable sheaves $\mathcal{F}$ on $S$ with respect to the ample class $H$ with Mukai vector $v(\mathcal{F})=(0,H,-2)$. We define an involution on $\mathcal{M}$ by
$$\sigma:\mathcal{M}\rightarrow\mathcal{M},\ [\mathcal{L}]\mapsto [\mathcal{E}xt_{\mathcal{O}_{S}}^1(\mathcal{L},\mathcal{O}_{S}(-H))],$$
and we set $\kappa=\tau^{*}\circ\sigma$.
One can prove that $\kappa$ is a regular involution on $\mathcal{M}$ and that its fixed locus has one 4-dimensional irreducible component plus 64 isolated points.
\begin{defi}\label{Prym}
We define the compactified Prymian $\mathcal{P}$ as the 4-dimensional component of $Fix(\kappa)$.
\end{defi}

\begin{thm}
The variety $\mathcal{P}$ is an irreducible symplectic V-manifold of dimension 4 with only 28 singular points analytically equivalent to 
$(\mathbb{C}^4 / \left\{\pm1\right\},$ $0)$.
\end{thm}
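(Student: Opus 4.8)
The plan is to realise $\mathcal{P}$ as the fixed locus of a \emph{symplectic} involution of $\mathcal{M}$ and then to read off the local structure of that fixed locus at the finitely many points where $\mathcal{M}$ is singular. First I would check that $\kappa$ is symplectic. The quotient $X = S/\tau$ is a del Pezzo surface, hence rational, so no nonzero holomorphic $2$-form on $S$ is $\tau$-invariant; thus $\tau^{*}\omega_{S} = -\omega_{S}$, and the induced automorphism $\tau^{*}$ of $\mathcal{M}$ is anti-symplectic, Mukai's form being built from a generator of $H^{2,0}(S)$. For $\sigma$, every $C \in |H|$ satisfies $\mathcal{O}_{S}(C) \cong \mathcal{O}_{S}(H)$, so $\mathcal{E}xt^{1}_{\mathcal{O}_{S}}(L,\mathcal{O}_{S}(-H)) \cong \mathcal{H}om_{\mathcal{O}_{C}}(L,\mathcal{O}_{C}) = L^{-1}$ for a line bundle $L$ on a smooth member $C$; hence over the open locus of smooth curves $\sigma$ is the fibrewise $(-1)$ of the relative Jacobian of the Lagrangian fibration $\mathcal{M} \to |H| \cong \mathbb{P}^{3}$, and fibrewise inversion of a Lagrangian torus fibration is anti-symplectic. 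Therefore $\kappa = \tau^{*}\circ\sigma$ is a symplectic involution; in particular, over the smooth locus of $\mathcal{M}$ its fixed locus is a smooth symplectic submanifold, so $\mathcal{P}$ is smooth there and carries the restriction $\omega$ of Mukai's form.

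Next I would determine $\Sing(\mathcal{M})$. As $H = \rho^{*}(-K_{X})$ lies on a wall for the Mukai vector $(0,H,-2)$, strictly $H$-semistable sheaves exist; writing the Mukai vector as a sum of classes of slope $-\tfrac12$ (the reduced support-degrees are positive, even, and sum to $H^{2}=4$) forces exactly two summands, with reduced supports $D$, $H-D$ satisfying $D\cdot H = 2$. Since $l_{i}\cdot(-K_{X}) = 1$ and $(\rho^{*}l_{i})^{2} = 2\,l_{i}^{2} = -2$, the only such effective classes $D$ are the $56$ classes $[C_{i}],[C_{i}']$ of the rigid conics on $S$, and the hypotheses of Definition~\ref{gene} ($C_{i},C_{i}'$ irreducible, meeting transversally in four points, $S$ without lines) then show that the strictly semistable polystable sheaves are exactly the $28$ sheaves $\mathcal{E}_{i}=\mathcal{O}_{C_{i}}(-2)\oplus\mathcal{O}_{C_{i}'}(-2)$, one per bitangent $m_{i}$; hence $\Sing(\mathcal{M}) = \{p_{1},\dots,p_{28}\}$, $p_{i}=[\mathcal{E}_{i}]$. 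Each summand is rigid ($\mathrm{ext}^{1}(\mathcal{O}_{C_{i}}(-2),\mathcal{O}_{C_{i}}(-2)) = (\rho^{*}l_{i})^{2}+2 = 0$), so Luna's slice theorem and the Kuranishi description of the moduli space show that analytically near $p_{i}$ the variety $\mathcal{M}$ is the holomorphic symplectic reduction $\mu^{-1}(0)/\!\!/\,\mathbb{C}^{*}$ of $W = V\oplus V^{\vee}$, where $V = \mathrm{Ext}^{1}(\mathcal{O}_{C_{i}}(-2),\mathcal{O}_{C_{i}'}(-2))$ has dimension equal to the Mukai pairing of the two summand classes, namely $[C_{i}]\cdot[C_{i}'] = 2\,l_{i}\cdot l_{i}' = 4$, the torus acts with weights $(1,-1)$, and $\mu(a,b) = \langle a,b\rangle$; equivalently, $\mathcal{M}$ near $p_{i}$ is the cone of rank-$\le 1$ traceless $4\times 4$ matrices (the minimal nilpotent orbit closure of $\mathfrak{sl}_{4}$). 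Finally, the same computation gives $\mathcal{E}xt^{1}_{\mathcal{O}_{S}}(\mathcal{O}_{C_{i}}(-2),\mathcal{O}_{S}(-H)) \cong \mathcal{O}_{C_{i}}(-2)$, so $\sigma$ fixes each summand of $\mathcal{E}_{i}$, while $\tau$ preserves $C_{i}=\rho^{-1}(l_{i})$ and $C_{i}'$; hence $\kappa(p_{i}) = p_{i}$.

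The heart of the proof is the local structure of $\mathcal{P} = Fix(\kappa)$ at $p_{i}$. The involution $\kappa$ acts symplectically on the slice $W \cong T^{*}V$, so its fixed subspace $W^{+}$ is a symplectic subspace; because $\sigma$ is contravariant (and $\tau^{*}$ preserves each block), $\kappa$ interchanges $\mathrm{Ext}^{1}(\mathcal{O}_{C_{i}}(-2),\mathcal{O}_{C_{i}'}(-2))$ and $\mathrm{Ext}^{1}(\mathcal{O}_{C_{i}'}(-2),\mathcal{O}_{C_{i}}(-2))$, so $W^{+}$ is the graph of an isomorphism $V \to V^{\vee}$, hence of dimension $4$. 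Being induced by a duality, $\sigma$ conjugates $\mathrm{Aut}(\mathcal{E}_{i})$, hence the torus $\mathbb{C}^{*}$, by $t\mapsto t^{-1}$; consequently $\mu\circ\kappa = -\mu$, so $W^{+}\subseteq\mu^{-1}(0)$, and only the subgroup $\{\pm 1\}\subset\mathbb{C}^{*}$ preserves $W^{+}$. Therefore, analytically near $p_{i}$, $\mathcal{P} \cong W^{+}/\{\pm 1\} \cong (\mathbb{C}^{4}/\{\pm 1\},0)$, and the restriction of $\omega$ is the standard $\{\pm 1\}$-invariant symplectic form on $\mathbb{C}^{4}$. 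Combined with the first step, this shows $\mathcal{P}$ is a symplectic V-manifold of dimension $4$ whose singular locus is exactly these $28$ points, each of the stated type. I expect the main obstacle to be precisely this local analysis: identifying $\kappa$ on the Luna slice --- that it swaps the two $\mathrm{Ext}^{1}$-blocks and conjugates the torus by $t\mapsto t^{-1}$ --- and checking that the slice together with its Kuranishi map is $\kappa$-equivariant.

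It remains to verify irreducibility. $\mathcal{P}$ is complete, being a closed subvariety of the projective variety $\mathcal{M}$. For $h^{2,0}(\mathcal{P}) = 1$: $\mathcal{M}$ has symplectic singularities and admits a symplectic resolution of deformation type $K3^{[3]}$, so $h^{2,0}(\mathcal{M}) = 1$, and since $\kappa$ is symplectic it acts trivially on $H^{2,0}(\mathcal{M})$ and descends to give $\omega$ on $\mathcal{P}$; uniqueness up to scalar then follows by a Hodge-theoretic argument, the codimension-$4$ points $p_{i}$ being negligible. Simple connectedness is obtained from the structure map $\mathcal{P}\to\mathbb{P}^{2\vee}$ of \cite{Markou}: removing the $28$ points leaves $\pi_{1}$ unchanged, and the Lagrangian fibration, with the multisections furnished by its $(1,2)$-polarisation, forces $\pi_{1}(\mathcal{P}) = 1$ (one may also compare with the simply connected $\mathcal{M}$). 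Thus $\mathcal{P}$ is an irreducible symplectic V-manifold of dimension $4$ with exactly $28$ singular points analytically equivalent to $(\mathbb{C}^{4}/\{\pm 1\},0)$.
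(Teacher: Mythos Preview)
The paper's own ``proof'' is simply a citation to \cite{Markou} (Theorem 3.4, Proposition 5.4 and Corollary 5.7), so there is nothing to compare line-by-line. Your direct analysis of $\kappa$ on $\mathcal{M}$ is the natural strategy and your local computation at the 28 strictly semistable points is broadly along the right lines (and you correctly flag the equivariance of the Luna slice as the delicate step). However, the route actually taken in \cite{Markou}, as one can infer from Theorem~\ref{Mukai} of the present paper, is different in an important respect: the \emph{irreducibility} of $\mathcal{P}$ (simple connectedness and $h^{2,0}=1$) is not obtained inside $\mathcal{M}$, but by passing to the birational model $M'$, the partial resolution of $S^{[2]}/\iota$. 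For $M'$ these properties are easy --- they descend from $S^{[2]}$ to the quotient by a finite group and survive the blow-up along the K3 surface $\overline{\Sigma}$ --- and the Mukai flop of Theorem~\ref{Mukai} transports them to $\mathcal{P}$ (a Mukai flop is an isomorphism in codimension one, so preserves $\pi_{1}$ and $H^{2,0}$).

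By contrast, the last paragraph of your proposal does not constitute a proof. Simple connectedness of $\mathcal{M}$ says nothing about that of a fixed locus (fixed loci of symplectic involutions on simply connected manifolds need not be simply connected), and the sentence ``the Lagrangian fibration, with the multisections furnished by its $(1,2)$-polarisation, forces $\pi_{1}(\mathcal{P})=1$'' is an assertion, not an argument: you would at minimum need connectedness and simple connectedness of the smooth fibres, control of the monodromy of $H_{1}$ of the fibres, and a Leray-type argument, none of which you supply. Likewise, restricting Mukai's form gives a nonzero class in $H^{2,0}(\mathcal{P})$, but uniqueness up to scalar requires an additional argument that you only gesture at. If you want a self-contained proof that avoids citing \cite{Markou}, the cleanest fix is to adopt their detour through $M'$ for the irreducibility, while keeping your direct local analysis of the 28 singular points.
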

\begin{proof}
See Theorem 3.4, Proposition 5.4 and Corollary 5.7 of \cite{Markou}.
\end{proof}

Now, we will introduce the Lagrangian fibration. We consider the linear subsystem $\epsilon:\mathcal{C}\rightarrow \mathbb{P}^{2\vee}$. If $t\in\mathbb{P}^{2\vee}$ is not tangent to $\overline{B_{0}}$ neither to $\overline{\Delta_{0}}$, which is the generic case,
then $C_{t}=\epsilon^{-1}(t)=\rho^{-1}\mu^{-1}(t)$ is a smooth genus-3 curve, and $E_{t}=C_{t}/\tau$ is elliptic. The double cover $\rho_{t}=\rho_{|C_{t}}:C_{t}\rightarrow E_{t}$ is branched at 4 points of the intersection $\Delta_{0}\cap E_{t}$ and the double cover $\mu_{t}=\mu_{|E_{t}}:E_{t}\rightarrow t\simeq \mathbb{P}^{1}$ is branched at 4 points of the intersection $\overline{B_{0}}\cap t$. We denote also $\tau_{t}=\tau_{|C_{t}}$.
Thus, we have the tower of double covers:
$$C_{t}\stackrel{2:1}{\rightarrow}E_{t}\stackrel{2:1}{\rightarrow}\mathbb{P}^{1}.$$
The following Lemma introduces the (1,2)-polarized Prym surfaces:

\begin{lemme}\label{prym}
For a generic line $t\in\mathbb{P}^{2}$, $ker(id+\tau_{t})$ has only one connected component in $J(C_{t})$, and the restriction of the principal polarization from $J(C_{t})$ to the abelian variety $\Prym(C_{t},\tau_{t})=ker(id+\tau_{t})$ is a polarization of type $(1,2)$.
\end{lemme}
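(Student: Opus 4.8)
The plan is to analyze the $\tau_t$-action on $J(C_t)$ via the tower $C_t \to E_t \to \mathbb{P}^1$, computing both the component group of $\ker(\mathrm{id}+\tau_t)$ and the type of the induced polarization. First I would set up the standard framework for Prym varieties of a non-\'etale double cover: here $\rho_t\colon C_t\to E_t$ is branched at the $4$ points of $\Delta_0\cap E_t$, so by Riemann--Hurwitz $g(C_t)=3$ and $g(E_t)=1$, and $\mathrm{Prym}(C_t,\tau_t)=\ker(\mathrm{id}+\tau_t)^\circ$ is an abelian surface. The push-pull maps $\rho_t^*\colon J(E_t)\to J(C_t)$ and $\mathrm{Nm}_{\rho_t}\colon J(C_t)\to J(E_t)$ give an isogeny $J(C_t)\sim J(E_t)\times \mathrm{Prym}(C_t,\tau_t)$; pulling back the principal polarization $\Theta_{C_t}$ and restricting to the Prym should be computed explicitly using the fact that $\rho_t^*$ multiplies the polarization by $2$.

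Next I would pin down the component group. For a double cover branched in $2n$ points, $\ker(\mathrm{id}+\tau_t)$ has two connected components when the cover is ``of the first kind'' and is connected otherwise; the distinction is governed by whether the point of order $2$ defining the cover (or rather the relevant boundary datum) is trivial, and concretely this comes down to a parity/monodromy condition on the $4$ branch points $\Delta_0\cap E_t$ together with the structure of $E_t\to\mathbb{P}^1$. I expect the cleanest route is to follow Mumford's analysis (or the version in Barth's paper, which Section 2 will invoke anyway) and show that for generic $t$ the four branch points are in ``general position'' in the sense that forces $\ker(\mathrm{id}+\tau_t)$ to be connected; one should check that a degeneration argument or an explicit monodromy computation in the family over $\mathbb{P}^{2\vee}$ rules out the disconnected case on a dense open set.

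For the polarization type, I would use the formula for the restricted polarization on a Prym of a ramified double cover: if $\rho_t$ is branched at $2n$ points then the restriction of $\Theta_{C_t}$ to $\mathrm{Prym}(C_t,\tau_t)$ has type $(1,\dots,1,n-1,\dots)$ — more precisely, for $g(C_t)=3$, $g(E_t)=1$ and $4$ branch points, the elementary divisors of the restricted polarization on the abelian surface work out to $(1,2)$. Concretely this follows by computing the index $[\ker(\mathrm{id}+\tau_t) : \rho_t^*J(E_t) + \mathrm{Prym}]$ or equivalently by diagonalizing the intersection form on $H^1(C_t,\mathbb{Z})^{-}$ using a symplectic basis adapted to the $\tau_t$-decomposition; the branch divisor contributes a factor of $2$ exactly once, yielding type $(1,2)$.

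The main obstacle is the component-group statement: ``$\ker(\mathrm{id}+\tau_t)$ has only one connected component'' is the delicate part, since it is a non-generic phenomenon to have two components but one must verify genericity within the specific family $\mathcal{C}\to\mathbb{P}^{2\vee}$ rather than in the moduli of all such covers. I would handle this by exhibiting one line $t_0$ (or a degeneration thereof) for which connectedness can be checked by hand — e.g.\ where $E_{t_0}\to\mathbb{P}^1$ or $\Delta_0\cap E_{t_0}$ takes a symmetric form amenable to direct computation — and then invoking that connectedness of the fibre kernel is an open condition in the family, so it holds for generic $t$. The polarization-type computation, by contrast, I expect to be essentially a bookkeeping exercise once the Prym is known to be a (connected) abelian surface.
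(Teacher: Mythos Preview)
The paper does not give a proof of this lemma: it simply cites Lemma 3.2 of \cite{Markou}. So there is no in-house argument to compare your proposal against; what you have written is an attempt to reprove the cited result from scratch.

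Your outline for the polarization type is on the right track and would go through: diagonalising the intersection form on $H^1(C_t,\mathbb{Z})^{-}$, or equivalently computing the index of $\rho_t^*J(E_t)\times\Prym$ in $J(C_t)$, does yield the type $(1,2)$ for a genus-$3$ curve with involution, elliptic quotient, and four branch points. This is exactly the situation treated in Barth \cite{Barth}, and the bookkeeping you describe is essentially what is done there.

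Where your proposal goes astray is the connectedness statement, which you flag as the ``main obstacle'' and propose to handle by a degeneration/monodromy argument within the family over $\mathbb{P}^{2\vee}$. This is unnecessary: for a \emph{ramified} double cover $\rho_t\colon C_t\to E_t$ one always has $\ker\rho_t^*=0$, hence $\ker(\mathrm{id}+\tau_t)=\ker(\rho_t^*\circ\Nm_{\rho_t})=\ker\Nm_{\rho_t}$, and $\ker\Nm_{\rho_t}$ is connected whenever the cover has at least one branch point (Mumford's analysis of the two-component phenomenon applies only to the \'etale case). So connectedness holds for \emph{every} line $t$ with $C_t$ smooth, not merely for generic $t$; the word ``generic'' in the lemma is there only to guarantee smoothness of $C_t$ (i.e.\ that $t$ is tangent neither to $\overline{B_0}$ nor to $\overline{\Delta_0}$). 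Your proposed specialisation-plus-openness argument would work, but it is solving a nonexistent problem.
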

\begin{proof}
See Lemma 3.2. in \cite{Markou}.
\end{proof} 
\begin{thm}
Identifying, as above, the 2-dimensional linear subsystem of $\tau$-invariant curves in $|H|$ with $\mathbb{P}^{2\vee}$, let $f_{\mathcal{P}}: \mathcal{P}\rightarrow \mathbb{P}^{2\vee}$ be the map sending each sheaf to its support. Then $f_{\mathcal{P}}$ is a Lagrangian fibration on $\mathcal{P}$ and the generic fiber $f_{\mathcal{P}}^{-1}(t)$ is the (1,2)-polarized Prym surface $\Prym(C_{t},\tau_{t})$.
\end{thm}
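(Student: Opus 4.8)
The plan is to realize both $\mathcal{P}$ and the map $f_{\mathcal{P}}$ inside the ambient Beauville--Mukai system on $\mathcal{M}=M_{S}^{H,s}(0,H,-2)$ and to read off the two assertions from the behaviour of $\kappa=\tau^{*}\circ\sigma$ on its fibres. Recall that $\mathcal{M}$ is a projective symplectic manifold (or orbifold) carrying the Mukai form $\omega_{\mathcal{M}}$, and that the support morphism $\pi:\mathcal{M}\rightarrow|H|\simeq\mathbb{P}^{3}$ is the Beauville--Mukai Lagrangian fibration: since $K_{S}=\mathcal{O}_{S}$ and $H^{2}=2(-K_{X})^{2}=4$, a general $C\in|H|$ is a smooth genus-$3$ curve, and a Grothendieck--Riemann--Roch computation with $v=(0,H,-2)$ shows that $\pi^{-1}(C)=\Pic^{0}(C)$ (the sheaves in question are $\iota_{*}L$ with $\deg L=0$), which is Lagrangian for $\omega_{\mathcal{M}}$. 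Then $f_{\mathcal{P}}$ is nothing but $\pi|_{\mathcal{P}}$, so it suffices to locate $\mathcal{P}\subset\mathcal{M}$ over $|H|$ and to describe $\pi^{-1}(t)\cap\mathcal{P}$ for generic $t$.

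First I would pin down the image of $f_{\mathcal{P}}$. The involution $\sigma=\mathcal{E}xt^{1}_{\mathcal{O}_{S}}(-,\mathcal{O}_{S}(-H))$ is relative duality along the support, hence preserves the support of a pure $1$-dimensional sheaf, while $\tau^{*}$ sends a sheaf supported on $C$ to one supported on $\tau(C)$. Therefore $\kappa$ carries $\pi^{-1}(C)$ to $\pi^{-1}(\tau(C))$, so $Fix(\kappa)$ lies over the fixed locus of the involution induced by $\tau$ on $|H|$, which (as recalled in Section \ref{Rappels}) is a point together with the plane $\mathbb{P}^{2\vee}$ of curves $\rho^{-1}\mu^{-1}(t)$. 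Since the fibre over the isolated point has dimension at most $3$, the $4$-dimensional irreducible component $\mathcal{P}$ must dominate $\mathbb{P}^{2\vee}$, and $f_{\mathcal{P}}=\pi|_{\mathcal{P}}:\mathcal{P}\rightarrow\mathbb{P}^{2\vee}$.

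Next I would compute the generic fibre. For generic $t$ the curve $C_{t}=\rho^{-1}\mu^{-1}(t)$ is smooth of genus $3$ with involution $\tau_{t}$, and $\pi^{-1}(t)=\Pic^{0}(C_{t})=J(C_{t})$ is $\kappa$-stable because $\tau(C_{t})=C_{t}$. On $J(C_{t})$ the map $\tau^{*}$ acts as $L\mapsto\tau_{t}^{*}L$, while relative duality on $C_{t}\subset S$ gives
$$\mathcal{E}xt^{1}_{\mathcal{O}_{S}}(\iota_{*}L,\mathcal{O}_{S}(-H))=\iota_{*}\bigl(L^{-1}\otimes N_{C_{t}/S}\otimes\mathcal{O}_{S}(-H)|_{C_{t}}\bigr)=\iota_{*}(L^{-1}),$$
because $N_{C_{t}/S}=\mathcal{O}_{S}(H)|_{C_{t}}$ as $C_{t}\in|H|$; hence $\sigma$ acts as multiplication by $-1$. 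Thus $\kappa|_{J(C_{t})}$ is the automorphism $x\mapsto-\tau_{t}^{*}x$, whose fixed locus is $\ker(\mathrm{id}+\tau_{t}^{*})$. By Lemma \ref{prym} this is connected for generic $t$ and equals $\Prym(C_{t},\tau_{t})$ with its induced $(1,2)$-polarization, so $f_{\mathcal{P}}^{-1}(t)=\Prym(C_{t},\tau_{t})$. Finally, for the Lagrangian property: $\tau$ is the covering involution of a double cover of the non-K3 surface $X$, hence anti-symplectic on $S$, so $\tau^{*}$ is anti-symplectic on $\mathcal{M}$; likewise the duality $\sigma$ reverses the sign of $\omega_{\mathcal{M}}$. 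Therefore $\kappa^{*}\omega_{\mathcal{M}}=\omega_{\mathcal{M}}$ and $\omega_{\mathcal{M}}$ restricts to the symplectic form on the smooth locus of $\mathcal{P}$; since $f_{\mathcal{P}}^{-1}(t)\subset\pi^{-1}(t)$ with $\pi^{-1}(t)$ Lagrangian in $\mathcal{M}$, the form vanishes on $f_{\mathcal{P}}^{-1}(t)$, which has dimension $2=\tfrac{1}{2}\dim\mathcal{P}$; hence $f_{\mathcal{P}}$ is a Lagrangian fibration. (Alternatively, once $f_{\mathcal{P}}$ is known to be projective with connected $2$-dimensional generic fibre and $\mathcal{P}$ is an irreducible symplectic V-manifold by the theorem above, the Lagrangian property of the fibres follows from the general structure theory of such fibrations.)

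The main difficulty will be the two facts I have used about the ambient symplectic geometry: that the support map $\pi$ is genuinely a Lagrangian fibration with fibre $\Pic^{0}(C)$ for general $C$ (a careful Mukai-vector bookkeeping), and, more delicately, that both $\tau^{*}$ and $\sigma$ act by $-1$ on $\omega_{\mathcal{M}}$, so that the symplectic form on $\mathcal{P}$ coincides with $\omega_{\mathcal{M}}|_{\mathcal{P}}$ rather than requiring an independent construction; a minor additional point is to check that $\mathcal{P}$ really dominates $\mathbb{P}^{2\vee}$ (and not the isolated point) and that the generic fibre is irreducible, which is exactly what Lemma \ref{prym} and the analysis recalled in Section \ref{Rappels} provide.
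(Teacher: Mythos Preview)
The paper itself does not prove this theorem: its entire proof is the citation ``See Theorem 3.4 of \cite{Markou}.'' Your proposal, by contrast, sketches an actual argument---and it is essentially the one given in \cite{Markou}. The identification of $\pi^{-1}(C)$ with $\Pic^{0}(C)$ via the Mukai-vector computation, the relative-duality calculation showing $\sigma$ acts as $L\mapsto L^{-1}$ on the Jacobian so that $\kappa|_{J(C_{t})}$ becomes $-\tau_{t}^{*}$, and the observation that $\kappa$ is symplectic (both factors being antisymplectic) so that the Lagrangian condition is inherited from the ambient Beauville--Mukai system, are exactly the ingredients Markushevich--Tikhomirov use. Your honest flagging of the two delicate points (that $\sigma$ genuinely reverses the Mukai form, and that $\mathcal{P}$ dominates the plane rather than the isolated fixed point) is appropriate; both are handled in \cite{Markou}, the first via the explicit description of the Mukai form and the second by the dimension count you indicate. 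So your sketch is correct and faithful to the source the paper cites, while the paper itself offers only the reference.
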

\begin{proof}
See Theorem 3.4 of \cite{Markou}.
\end{proof}

In fact, $\mathcal{P}$ is bimeromorphic to a partial resolution of a quotient of $S^{[2]}$.
Consider Beauville's involution (see Section 6 of \cite{Beau}):$$\iota_{0}: S^{[2]}\rightarrow S^{[2]}, \xi\mapsto\xi'=(\left\langle \xi\right\rangle\cap S)-\xi.$$
Here $S$ is taken in its embedding as a quartic surface in $\mathbb{P}^3$, given by the linear system $|H|$, $\left\langle \xi\right\rangle$ stands for the line in $\mathbb{P}^3$ spanned by $\xi$, and $\xi'$ is the residual intersection of $\left\langle \xi\right\rangle$ with $S$. By \cite{Beau}, this involution is regular whenever $S$ contains no lines, which is true in our case. Further, $\tau$ induces on $S^{[2]}$ an involution which we will denote by the same symbol. As $\tau$ on $S$ is the restriction of a linear involution on $\mathbb{P}^3$, $\iota_{0}$ commutes with $\tau$, and the composition $\iota=\iota_{0}\circ\tau$ is also an involution.
\begin{prop}
The fixed locus of $\iota$ is the union of a K3 surface $\Sigma\subset S^{[2]}$ and 28 isolated points.
\end{prop}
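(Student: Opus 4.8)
The plan is to translate $\iota(\xi)=\xi$ into incidence geometry on the quartic model $S\subset\mathbb{P}^{3}$ given by $|H|$, and then to separate the locus into a surface and points. Writing $\langle\xi\rangle$ for the line spanned by a length-$2$ subscheme $\xi$ and recalling that $\iota_{0}$ sends $\eta$ to $(\langle\eta\rangle\cap S)-\eta$, the point $\xi$ is $\iota$-fixed iff $\langle\tau\xi\rangle\cap S=\xi+\tau\xi$, equivalently (apply $\tau$) iff
$$\langle\xi\rangle\cap S=\xi+\tau(\xi).$$
Since then $\tau(\xi)$, of length $2$, lies on $\langle\xi\rangle\cap\tau\langle\xi\rangle$, the line $\ell:=\langle\xi\rangle$ must be $\tau$-invariant. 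The linear involution that $\tau$ induces on $\mathbb{P}^{3}$ has fixed locus a point $P$ together with a disjoint plane $\Pi$, and its invariant lines are exactly the lines through $P$ and the lines lying in $\Pi$; moreover $Fix(\tau|_{S})$ is the ramification curve $\Delta=\rho^{-1}(\Delta_{0})$, which has no isolated point, so $P\notin S$ and $S\cap\Pi=\Delta\cong\Delta_{0}\cong\overline{\Delta_{0}}$ is a smooth plane quartic, canonically embedded in $\Pi\simeq\mathbb{P}^{2}$ by $\mathcal{O}_{S}(1)$.

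If $\ell\subset\Pi$, then $\tau|_{\Pi}=id$, so the condition becomes $\ell\cap S=2\xi$: the line $\ell$ is a bitangent of $\Delta$ and $\xi$ its contact pair. For $(\overline{B_{0}},\overline{\Delta_{0}})$ sufficiently generic $\Delta$ is a smooth quartic without hyperflex, hence has exactly $28$ bitangents, giving $28$ such subschemes; these are isolated in $Fix(\iota)$, because their spans lie in $\Pi$ while every other fixed point has span through $P\notin\Pi$. If instead $\ell$ passes through $P$, then over a generic such line $\ell\cap S$ consists of four distinct points forming two orbits of $\tau|_{\ell}$, and the solutions are the four subschemes obtained by choosing one point in each orbit; hence the remaining part $\Sigma$ of the fixed locus is a surface carrying a degree-$4$ morphism onto $\mathbb{P}^{2}\simeq\{\text{lines through }P\}$. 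Since the Beauville involution $\iota_{0}$ and the covering involution $\tau$ are both anti-symplectic on $S^{[2]}$ (the first by \cite{Beau}, the second because $S/\tau=X$ is rational), $\iota$ is symplectic; so $Fix(\iota)$ is smooth with only even-dimensional components, the symplectic form of $S^{[2]}$ restricts nondegenerately to $\Sigma$ and trivializes $\det N_{\Sigma/S^{[2]}}$, whence adjunction gives $K_{\Sigma}=\mathcal{O}_{\Sigma}$. Thus $Fix(\iota)$ is the disjoint union of the $28$ points above with the smooth surface $\Sigma$, which is a K3 or an abelian surface.

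It remains to exclude the abelian case, and this I would do by identifying $\Sigma$. On $\Sigma$ we have $\iota_{0}|_{\Sigma}=\tau|_{\Sigma}$, an anti-symplectic involution whose quotient is smooth (it is a reflection at each fixed point) and for which the residual map $\Sigma/\tau\to\mathbb{P}^{2}$ is $2{:}1$, ramified exactly where $\ell\cap S$ acquires a $\tau$-fixed point, i.e.\ where $\ell\cap\Pi\in\Delta$ — so over the quartic $\Delta\cong\overline{\Delta_{0}}$. Hence $\Sigma/\tau$ is the double cover of $\mathbb{P}^{2}$ branched in $\overline{\Delta_{0}}$, namely the degree-$2$ del Pezzo surface $\widetilde{X}$ attached to the pair with the roles of $\overline{B_{0}},\overline{\Delta_{0}}$ interchanged. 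Moreover every line through $P$ is $\tau$-invariant, so the projection $\pi_{P}\colon S\to\mathbb{P}^{2}$ is constant on $\tau$-orbits and factors as $S\to X\to\mathbb{P}^{2}$ through the anticanonical double cover of $X$; consequently the branch of the degree-$4$ map $\Sigma\to\mathbb{P}^{2}$ is $\Delta$ together with a quartic curve $\overline{B}\cong\overline{B_{0}}$, and the branch curve of $\Sigma\to\widetilde{X}$ maps isomorphically onto $\overline{B}$, hence — its intersection numbers being those of $-2K_{\widetilde{X}}$, together with the Hodge index theorem — lies in $|-2K_{\widetilde{X}}|$. Therefore $\Sigma\to\widetilde{X}$ is a double cover branched along an anticanonical-times-$2$ curve, so $\Sigma$ is a K3 surface; in fact one recognizes $\Sigma$ as the "dual" K3 surface $\widetilde{S}$.

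The real obstacle is exactly this last identification: one must control the degenerate lines through $P$ tangent to $S$, along which the $4{:}1$ cover ramifies, the exact branch locus of $\Sigma\to\Sigma/\tau$, and the non-reduced subschemes, so that neither $\Sigma$ nor the number $28$ picks up a spurious contribution; the reduction to $\tau$-invariant lines and the bitangent count, by contrast, are routine. As a consistency check, the holomorphic Lefschetz formula for $\iota$ reads $3=\chi(\mathcal{O}_{S^{[2]}})=(\Sigma\cdot\Sigma)/16+e(\Sigma)/48+28/16$, which is satisfied by a K3 surface $\Sigma$ with $\Sigma\cdot\Sigma=12$.
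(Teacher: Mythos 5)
The paper does not actually prove this proposition: it cites Lemma 5.3 of \cite{Markou} and remarks that the statement also follows from Mongardi's classification \cite{Mongordi} of fixed loci of symplectic involutions on manifolds of $K3^{[2]}$-type, once one knows $\iota=\iota_{0}\circ\tau$ is symplectic. Your direct incidence-geometric argument is therefore a genuinely different route from what is on the page (it essentially reconstructs the proof of the cited lemma, and yields more: it identifies the $28$ points as the bitangent contact divisors of $\Delta\subset\Pi$ and the surface as $\widetilde{S}$). Several of the points you worry about in fact dissolve. Smoothness of $Fix(\iota)$ is automatic for a finite-order automorphism of a smooth variety, so symplecticity is only needed to force the components to be points or surfaces with trivial canonical bundle. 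The count of $28$ needs no hyperflex hypothesis on $\overline{\Delta_{0}}$: a smooth plane quartic has exactly $28$ bitangents in the sense of odd theta characteristics, a hyperflex contributing the non-reduced fixed point $2p$. And the isolation of these $28$ points is clean, because the span map $S^{[2]}\rightarrow Gr(2,4)$ is a morphism ($S$ contains no lines) and the two families of $\tau$-invariant lines, $\left\{\ell\ni P\right\}$ and $\left\{\ell\subset\Pi\right\}$, are disjoint closed subsets of the Grassmannian, so the two cases give disjoint open-and-closed pieces of $Fix(\iota)$.

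The genuine gap is the one you flag yourself: over the lines through $P$ you only analyze the generic fiber, so you have neither excluded additional isolated fixed points sitting over the degenerate lines (those tangent to $S$ or meeting $\Delta$), nor shown that the surface part is connected --- a priori it could split into two K3's, each of degree $2$ over $\mathbb{P}^{2}$, which would contradict the statement. (Extra two-dimensional components elsewhere are excluded for free: the fibers of the span map are contained in the finite set of length-$2$ subschemes of $\ell\cap S$, so any surface component dominates $\mathbb{P}^{2}$ and is absorbed into the degree-$4$ count; and the abelian case is excluded because a finite map from an abelian surface to $\mathbb{P}^{2}$ has degree $L^{2}=2h^{0}(L)\geq 6>4$.) Both remaining issues are settled by the identification of $\Sigma$ with $\widetilde{S}$: the fiber of $\Sigma\rightarrow\mathbb{P}^{2}$ over $\alpha$ consists exactly of the four divisors $\gamma_{1,j}+\gamma_{2,k}$ of the bigonal construction of Section 2, and the monodromy argument used there gives irreducibility --- but this is precisely the part you leave as a sketch, and it does require the local analysis over the tangent lines that you defer. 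If you do not want to carry that out, the shortest complete proof is the one the paper points to: verify that $\iota_{0}$ (by \cite{Beau}) and $\tau$ are anti-symplectic, hence $\iota$ is a symplectic involution, and invoke Mongardi's theorem.
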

\begin{proof}
See Lemma 5.3 of \cite{Markou}. In fact, as follows from the recent work of Mongardi \cite{Mongordi}, the fixed locus of any symplectic involution on an irreducible symplectic variety deformation equivalent to the Hilbert square of a K3 surface is as in the statement of the proposition.
\end{proof}
Let $M=S^{[2]}/\iota$ and $\overline{\Sigma}$ be the image of $\Sigma$ in $M$. We also denote by $M'$ the partial resolution of singularities of $M$ obtained by blowing up $\overline{\Sigma}$, and by $\overline{\Sigma}'$ the exceptional divisor of the blowup.
\begin{thm}\label{Mukai}
The variety $M'$ is an irreducible symplectic V-manifold whose singularities are 28 points of analytic type $(\mathbb{C}^4 / \left\{\pm1\right\},0)$.
Moreover there is a Mukai flop between $M'$ and $\mathcal{P}$, which is an isomorphism between $M'\setminus\Pi'$and $\mathcal{P}\setminus\Pi$, where $\Pi'$and $\Pi$ are Lagrangian subvarieties isomorphic to $\mathbb{P}^{2}$.
\end{thm}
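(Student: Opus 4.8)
Here is a plan. The two assertions are essentially independent, so I would treat them in turn.

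First I would establish that $\iota=\iota_0\circ\tau$ is a \emph{symplectic} involution of $S^{[2]}$. The involution $\tau$ is antisymplectic on $S$: its fixed locus is the ramification curve $\Delta$, whereas a symplectic involution of a $K3$ surface has isolated fixed points, so $\tau$ acts by $-1$ on $H^{2,0}(S)$, hence on $H^{2,0}(S^{[2]})$; and Beauville's involution $\iota_0$ is antisymplectic as well, its fixed locus being the (non-$K3$) surface of bitangent lines of $S\subset\mathbb P^3$ (see \cite{Beau}). Thus $\iota$ acts trivially on the one-dimensional space $H^{2,0}(S^{[2]})$, and the holomorphic $2$-form descends to a symplectic form on $M_{\mathrm{reg}}$. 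By the proposition above, $\mathrm{Fix}(\iota)$ is $\Sigma$ together with $28$ isolated points; along $\Sigma$ the involution $\iota$ acts on the two-dimensional normal bundle by $-\mathrm{id}$ (by $-\mathrm{id}$ generically, since $\Sigma$ is a connected component of $\mathrm{Fix}(\iota)$, hence everywhere), so $M$ has transverse $A_1$ singularities along $\overline\Sigma$, while at the $28$ isolated points it has singularities of type $(\mathbb C^4/\{\pm1\},0)$. Locally near $\overline\Sigma$ one has $M\cong U\times(\mathbb C^2/\{\pm1\})$ with $\overline\Sigma=U\times\{0\}$, so the blow-up of $\overline\Sigma$ is fibrewise the minimal (hence crepant) resolution of the $A_1$-cone; therefore $M'$ is smooth along $\overline\Sigma'$, the symplectic form extends over $\overline\Sigma'$, and the only remaining singularities of $M'$ are the $28$ points of type $(\mathbb C^4/\{\pm1\},0)$, which lie off $\overline\Sigma$. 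Finally $M'$ is simply connected — $S^{[2]}$ is, $\iota$ has fixed points, so $M=S^{[2]}/\iota$ is simply connected (Armstrong), and a blow-up does not change $\pi_1$ — and its holomorphic $2$-form is unique up to a constant, any such form pulling back to an $\iota$-invariant element of $H^{2,0}(S^{[2]})$. Hence $M'$ is an irreducible symplectic V-manifold of the stated type.

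For the Mukai flop I would exhibit both $M'$ and $\mathcal P$ as the relative compactified Prymian over the open set $V\subset\mathbb P^{2\vee}$ of lines $t$ for which $C_t$ is a smooth plane quartic. For $\mathcal P$ this is essentially the construction: $\mathcal P$ is the four-dimensional component of $\mathrm{Fix}(\kappa)$, and since $\kappa$ moves supports by $\tau$, $\mathrm{Fix}(\kappa)$ is fibred over the $\tau$-invariant curves, the fibre over a smooth $C_t$ being the set of line bundles $L$ with $L\otimes\tau_t^{*}L\cong\mathcal O_{C_t}$, i.e.\ $\Prym(C_t,\tau_t)$. For $M'$ I would argue as follows. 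Sending a general $\xi\in S^{[2]}$ to the line $t(\xi)\in\mathbb P^{2\vee}$ spanned by the images of its two points in $\mathbb P^2$ identifies $S^{[2]}$ birationally with the relative symmetric square $\mathcal C^{(2)}\to\mathbb P^{2\vee}$, since then $\xi\subset C_{t(\xi)}$; under this identification $\tau$ becomes the fibrewise involution $\{a,b\}\mapsto\{\tau_ta,\tau_tb\}$ and $\iota_0$ becomes the involution residual with respect to the linear system of line sections of the plane quartic $C_t\subset H_t\cong\mathbb P^2$, which is $|K_{C_t}|$. A suitably normalized fibrewise Abel--Jacobi map then carries $\mathcal C^{(2)}$ onto the relative Prymian; the key identity is that the line section through $\xi$ has class $K_{C_t}=\rho_t^{*}(\mu_t^{*}\mathcal O_{\mathbb P^1}(1))$, which forces $\xi$ and $\iota\xi$ to define the same point, so the map is $\iota$-invariant, and with the correct normalization it is generically injective on $\mathcal C^{(2)}/\iota$. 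Since over $V$ the morphism $M'\to S^{[2]}/\iota$ is an isomorphism — it only modifies $\overline\Sigma$, which lies over a proper closed subset — this yields an isomorphism $M'|_V\cong\mathcal P|_V$, hence a birational map $\phi\colon M'\dashrightarrow\mathcal P$. It then remains to show that $\phi$ is the Mukai flop along a $\mathbb P^2$: $\phi$ is an isomorphism in codimension one, and by analysing the special fibres over $\mathbb P^{2\vee}\setminus V$ one sees that its indeterminacy is a single smooth Lagrangian surface isomorphic to $\mathbb P^2$ on each side — on $\mathcal P$ the plane $\Pi$ singled out in \cite{Markou}, on $M'$ the plane $\Pi'$ arising from the planes of $\tau$-invariant configurations on the $C_t$. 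A smooth Lagrangian $\mathbb P^2$ in a holomorphically symplectic fourfold has normal bundle $\cong\Omega^1_{\mathbb P^2}$, and a birational map of irreducible symplectic V-manifolds that is an isomorphism in codimension one and whose indeterminacy on each side is such a plane is the Mukai flop along it — the classical factorization of such birational maps into Mukai flops applies because the $28$ quotient singularities are disjoint from $\Pi$, $\Pi'$ and from the modified loci. Hence $\phi$ is the Mukai flop of $M'$ along $\Pi'$ (equivalently of $\mathcal P$ along $\Pi$), restricting to an isomorphism $M'\setminus\Pi'\cong\mathcal P\setminus\Pi$.

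The main obstacle is the precise form of the second step together with the localization in the third. The naive fibrewise map $\xi\mapsto\mathcal O_{C_t}(\xi-\tau_t\xi)$ into the Prym subvariety $\ker(\mathrm{id}+\tau_t^{*})\subset J(C_t)$ is several-to-one even after passing to the $\iota$-quotient, so one must pin down the correct Prym torsor and normalization so that the fibrewise map becomes generically an isomorphism onto $\mathcal P$ — a delicate point, connected to the distinction between the Prym realized as a subvariety and as a quotient of $J(C_t)$, which is exactly the kind of duality phenomenon studied in the rest of the paper. One then has to identify the indeterminacy of $\phi$ as precisely \emph{one} copy of $\mathbb P^2$ with normal bundle $\Omega^1_{\mathbb P^2}$ and verify that the modification is a single Mukai flop, which requires an explicit local analytic model near $\Pi'$ matching the blow-up-of-$\Sigma$ picture on the $M'$ side with the degenerate-Prym picture on the $\mathcal P$ side. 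An alternative route avoiding the Abel--Jacobi map would be to produce the flop already on the six-dimensional moduli space $\mathcal M=M_S(0,H,-2)$ as a $\kappa$-equivariant Mukai flop along a Lagrangian $\mathbb P^3\subset\mathcal M$, and then restrict to the four-dimensional components of the fixed loci, the flopped $\mathbb P^3$ meeting $\mathcal P$ in $\Pi$; one would still have to match the resulting model of $\mathcal P$ with the quotient-and-blow-up description of $M'$.
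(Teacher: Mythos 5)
The paper itself does not prove this theorem: it is imported from Markushevich--Tikhomirov, and the proof given is the citation to Corollary 5.7 of \cite{Markou}. Your first paragraph (the V-manifold structure of $M'$) is essentially the argument carried out there and is correct: $\iota$ is symplectic as the composition of two antisymplectic involutions, its fixed locus is $\Sigma$ plus $28$ points by Proposition 1.7, the involution acts by $-\mathrm{id}$ on the normal bundle of each component of the fixed locus, the blow-up of $\overline{\Sigma}$ is a crepant resolution of the transverse $A_1$ locus, and simple connectedness and $h^{2,0}=1$ follow as you say.

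The second half, which is the real content, has genuine gaps, only some of which you flag. First, your localization is wrong: $\overline{\Sigma}$ does not lie over a proper closed subset of $\mathbb{P}^{2\vee}$. A point of $\Sigma$ is a $\xi$ whose span $\langle\xi\rangle$ is a $\tau$-invariant line with $\langle\xi\rangle\cap S=\xi+\tau\xi$; generically such a line passes through the isolated fixed point $q_{0}$ of $\tau$ on $\mathbb{P}^{3}$, and for every $t$ the pencil of lines through $q_{0}$ inside the plane $H_{t}$ sweeps out a whole curve $\Sigma\cap C_{t}^{(2)}$ (it is precisely the Pantazis curve $C_{t}^{\vee}$ of Section 2, i.e.\ the theta divisor of the fibre of $\mathcal{P}$). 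So $M'\to M$ modifies \emph{every} fibre along a curve, the generic fibre of $M'$ is not $C_{t}^{(2)}/\iota$, and the isomorphism $M'|_{V}\cong\mathcal{P}|_{V}$ cannot be obtained as you state. Second, the normalization problem you raise is real and is not a technicality: the map $\xi\mapsto\mathcal{O}_{C_{t}}(\xi-\tau_{t}\xi)$ does land in $\ker(1+\tau_{t}^{*})$ and is $\iota$-invariant, but it contracts the elliptic curve $\{p+\tau_{t}p\}$ and has degree $4$ from $C_{t}^{(2)}/\iota$ onto $\Prym(C_{t},\tau_{t})$ (the relevant fibre of $1-\tau_{t}^{*}$ meets the theta divisor of $J(C_{t})$ in $8$ points), so it does not identify the fibres; producing the correct birational identification --- which in \cite{Markou} is done by exhibiting the sheaf in $M_{S}^{H,s}(0,H,-2)$ directly rather than via a fibrewise Abel--Jacobi map --- is exactly the missing content. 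Third, the identification of the indeterminacy locus as a single Lagrangian $\mathbb{P}^{2}$ on each side, and the reduction to the flop-factorization theorem in the presence of the $28$ quotient singularities, are asserted rather than proved. In short: the first assertion is proved, the second is only outlined, and the outline contains a false step; for the purposes of this paper the correct proof is the citation to \cite{Markou}.
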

\begin{proof}
See Corollary 5.7 of \cite{Markou}.
\end{proof}

\section{The (1,2)-polarized Prym surfaces}\label{Pol}



In this section $\overline{B_{0}}$ and $\overline{\Delta_{0}}$ are smooth quartics tangent to each other at eight points lying on a conic (we will denote by $U$ the set of such pairs). Moreover, we assume that the pairs $(\overline{B_{0}},\overline{\Delta_{0}})$ and $(\overline{\Delta_{0}},\overline{B_{0}})$ are in $\mathfrak{L}$. We can permute the roles of $\overline{\Delta_{0}}$ and $\overline{B_{0}}$ in the above construction. Namely, consider the double cover $\widetilde{\mu}:\widetilde{X}\rightarrow \mathbb{P}^{2}$ branched in $\overline{\Delta_{0}}$. Let $\widetilde{\Delta_{0}}=\widetilde{\mu}^{-1}(\overline{\Delta_{0}})$, and let $\widetilde{B_{0}}$ and $\widetilde{B_{0}'}$ be the two curves mapped to $\overline{B_{0}}$ by $\widetilde{\mu}$. We denote by $\widetilde{i}$ the involution of $\widetilde{X}$ induced by $\widetilde{\mu}$ which exchanges $\widetilde{B_{0}}$ and $\widetilde{B_{0}'}$.
Consider the double cover $\widetilde{\rho}:\widetilde{S}\rightarrow \widetilde{X}$ branched in $\widetilde{B_{0}}$ and set $\widetilde{B}=\rho^{-1}(\widetilde{B_{0}})$. And, denote by $\widetilde{\tau}$ the involution of $S$ induced by $\widetilde{\rho}$. We have the diagram

$$\xymatrix{
 & & \widetilde{\Delta_{0}} \ar@{=}[rr] \ar@{^{(}->}[d]& & \overline{\Delta_{0}}\ar@{^{(}->}[d]\\
 \widetilde{S} \ar@(ul,dl)[]_{\widetilde{\tau}}\ar[rr]_{\widetilde{\rho}}^{2:1} & & \widetilde{X} \ar[rr]_{\widetilde{\mu}}^{2:1} & & \mathbb{P}^{2} \\
 \widetilde{B} \ar@{=}[rr] \ar@{^{(}->}[u] & & \widetilde{B_{0}} \ar@{^{(}->}[u] & & }$$
similar to (1).


Like in Section \ref{Rappels}, for a generic line in $\mathbb{P}^{2}$, we denote $\widetilde{E_{t}}=\widetilde{\mu}^{-1}(t)$, $\widetilde{C_{t}}=\widetilde{\rho}^{-1}(\widetilde{E_{t}})$, $\widetilde{\mu_{t}}=\widetilde{\mu}_{|E_{t}}$,  $\widetilde{\rho_{t}}=\widetilde{\rho}_{|C_{t}}$ and $\widetilde{\tau_{t}}=\widetilde{\tau}_{|C_{t}}$. The generic curves $\widetilde{E_{t}}$ are elliptic and the curves $\widetilde{C_{t}}$ are of genus 3. Thus, we have the tower of double covers:

$$\widetilde{C_{t}}\stackrel{2:1}{\rightarrow}\widetilde{E_{t}}\stackrel{2:1}{\rightarrow}\mathbb{P}^{1}.$$
By Lemma \ref{prym}, $\Prym(\widetilde{C_{t}},\widetilde{\tau_{t}})$ is also a (1,2)-polarized Prym surface.
We denote by $\Prym(C_{t},\tau_{t})^{\vee}$ the dual of the polarized abelian variety 
$\Prym(C_{t},\tau_{t})$.
The answer to one of our questions is given by the following proposition.
\begin{prop}
For a generic line $t\in\mathbb{P}^{2}$, we have the isomorphism: 
$$\Prym(C_{t},\tau_{t})^{\vee}\simeq\Prym(\widetilde{C_{t}},\widetilde{\tau_{t}}).$$
\end{prop}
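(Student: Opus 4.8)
The plan is to recognise the two Prym surfaces as being cut out of one and the same abelian variety --- a fixed isogeny factor of the Jacobian of the Galois closure of $C_{t}\to t$ --- by the two complementary intermediate towers, so that the asserted isomorphism becomes an instance of the duality between the Prym varieties attached to a tower of double covers $C\to E\to\mathbb{P}^{1}$ and to its bigonal transform. In the $(1,2)$-polarised situation this duality goes back to Barth's analysis in \cite{Barth}, so the real work is to match the construction of $\widetilde{X}$, $\widetilde{S}$, $\widetilde{C_{t}}$, $\widetilde{E_{t}}$ in Section \ref{Pol} with the bigonal transform of the tower $C_{t}\to E_{t}\to t$.

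First I would make the symmetry of the data transparent. For generic $t$ there are two towers of double covers, $C_{t}\to E_{t}\to t$ and $\widetilde{C_{t}}\to\widetilde{E_{t}}\to t$: here $E_{t}\to t$ is branched along $\overline{B_{0}}\cap t$, $\widetilde{E_{t}}\to t$ along $\overline{\Delta_{0}}\cap t$, and $C_{t}\to E_{t}$, $\widetilde{C_{t}}\to\widetilde{E_{t}}$ are branched respectively along the half of $\mu_{t}^{-1}(\overline{\Delta_{0}}\cap t)$ cut by the component $\Delta_{0}\subset\mu^{-1}(\overline{\Delta_{0}})$ and along the half of $\widetilde{\mu_{t}}^{-1}(\overline{B_{0}}\cap t)$ cut by the component $\widetilde{B_{0}}\subset\widetilde{\mu}^{-1}(\overline{B_{0}})$. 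Since $i(\Delta_{0})\neq\Delta_{0}$ the composite $C_{t}\to t$ is not Galois, and its Galois closure has for Galois group the dihedral group of order $8$; unwinding the definitions of $\widetilde{X}$, $\widetilde{S}$ (fibre products over $\mathbb{P}^{2}$, then restriction to $t$) I would check that $\widetilde{C_{t}}\to t$ has the same Galois closure, the roles of the two Klein four-subgroups and of the order-two subgroups sitting inside them being interchanged; this identifies $\widetilde{C_{t}}\to\widetilde{E_{t}}\to t$ with the bigonal transform of $C_{t}\to E_{t}\to t$. The genericity needed at every step --- smoothness of all the covers, the eight preimage points distinct, $\ker(\mathrm{id}+\tau_{t})$ connected (Lemma \ref{prym}), the induced polarisation of type $(1,2)$, no exceptional tangencies --- holds for $t$ outside a proper closed subset of $\mathbb{P}^{2\vee}$ because $(\overline{B_{0}},\overline{\Delta_{0}})$ and $(\overline{\Delta_{0}},\overline{B_{0}})$ lie in $\mathfrak{L}$ (Definition \ref{gene}).

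Feeding this into Barth's description \cite{Barth} of the dual of a $(1,2)$-polarised Prym surface --- which identifies, through the isotypic decomposition of the Jacobian of the degree-eight cover under the dihedral group of order $8$ together with the restriction of its principal polarisation, the dual of $\Prym(C_{t},\tau_{t})$ with the Prym of the bigonal-dual tower --- then gives $\Prym(C_{t},\tau_{t})^{\vee}\simeq\Prym(\widetilde{C_{t}},\widetilde{\tau_{t}})$. The main obstacle is the identification carried out in the second step: one has to be careful about which of the two ``halves'' of the eight-point preimage serves as branch locus upstairs, because the two halves differ by the elliptic involution of $E_{t}$ and a priori yield the tower and a twist of it; checking that the half forced by $\Delta_{0}$ is matched with the half forced by $\widetilde{B_{0}}$ uses that $\Delta_{0}\in|-2K_{X}|$ and $\widetilde{B_{0}}\in|-2K_{\widetilde{X}}|$, and ultimately that the eight tangency points of $\overline{B_{0}}$, $\overline{\Delta_{0}}$ lie on a conic. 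A secondary point is to keep the final identification one of \emph{polarised} abelian varieties rather than a mere isogeny, which is exactly why the $(1,2)$-polarisations must be carried along through Barth's correspondence.
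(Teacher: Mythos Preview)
Your overall strategy coincides with the paper's: reduce to the Pantazis--Barth duality between the Prym varieties of a tower $C\to E\to\mathbb{P}^{1}$ and of its bigonal transform, then identify $\widetilde{C_{t}}\to\widetilde{E_{t}}\to t$ with the bigonal dual $C_{t}^{\vee}\to E_{t}^{\vee}\to t$. The paper cites Pantazis directly for the duality and carries out the identification by an explicit ramification calculation: one checks that $\mu_{t}^{\vee}$ and $\widetilde{\mu_{t}}$ have the same four branch points (namely $\overline{\Delta_{0}}\cap t$), so $E_{t}^{\vee}\simeq\widetilde{E_{t}}$, and then one has to show that the four branch points of $\rho_{t}^{\vee}$, transported to $\widetilde{E_{t}}$, all lie on one of the two curves $\widetilde{B_{0}}$, $\widetilde{B_{0}'}$. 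The paper settles this last point by a monodromy argument: $\pi_{1}(\mathbb{P}^{2\vee}\setminus\overline{B_{0}}^{\vee})$ acts transitively on $\overline{B_{0}}\cap t$ (irreducibility of $\overline{B_{0}}$), so a split of type $(k,4-k)$ with $1\le k\le 3$ is impossible. Once all four lie on one component, which component is irrelevant, since composing with $\widetilde{i}$ passes to the other.

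Your route via the $D_{4}$ Galois closure is conceptually sound and, if completed, would bypass the monodromy step. But there is a gap precisely where you write ``unwinding the definitions of $\widetilde{X}$, $\widetilde{S}$ (fibre products over $\mathbb{P}^{2}$, then restriction to $t$)''. Neither $S$ nor $\widetilde{S}$ is a fibre product over $\mathbb{P}^{2}$: each is a double cover of the relevant del Pezzo branched along only \emph{one} of the two components of the pullback of the opposite quartic. So it is not automatic that $C_{t}\to t$ and $\widetilde{C_{t}}\to t$ share the same Galois closure; this is exactly the content that the paper's monodromy argument supplies. Relatedly, your description of the ``main obstacle'' is slightly off target. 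The delicate point is not \emph{which} half of $\widetilde{\mu_{t}}^{-1}(\overline{B_{0}}\cap t)$ carries the branch locus of $\rho_{t}^{\vee}$---either half gives an isomorphic cover via $\widetilde{i}$---but rather that the four points are not \emph{split} between the two halves. Your appeal to $\Delta_{0}\in|-2K_{X}|$ and to the conic through the eight tangencies does not obviously rule out such a split; the paper's transitivity-of-monodromy argument does.
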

\begin{proof}~~\\
\begin{itemize}
\item Step 1: \emph{The curve bigonally related to $C_{t}$}

Starting with the tower $C_{t}\rightarrow E_{t}\rightarrow\mathbb{P}^{1}$, we will construct a curve $C_{t}^{\vee}$ whose points correspond to the different ways to lift the pairs $\mu_{t}^{-1}(p)$, for $p\in\mathbb{P}^{1}$, to a pair in $C_{t}$, i.e.,
$$C_{t}^{\vee}=\left\{\left.p+q\in\Div^{(2)}(C_{t}) \right| \left[\rho_{t}(p)+\rho_{t}(q)\right]=\left[\mu_{t}^{*}\mathscr{O}_{\mathbb{P}^{1}}(1)\right] \right\}.$$
We denote by $\tau_{t}^{\vee}$ the involution $C_{t}^{\vee}\rightarrow C_{t}^{\vee}$ sending a lift to its complement. We have $\tau_{t}^{\vee}=\tau^{*}_{t|\Div^{(2)}(C_{t})}$.
Let $E_{t}^{\vee}=C_{t}^{\vee}/\tau_{t}^{\vee}$. 
We also define the map $\mu_{t}^{\vee}: E_{t}^{\vee}\rightarrow \mathbb{P}^{1}$ which sends a lift of $\mu_{t}^{-1}(p)$ ($p\in\mathbb{P}^{1}$) to $p$. 

For a better understanding we draw a diagram.
Let $\alpha$ be a generic point in $\mathbb{P}^{1}$ (a point which is not a branch point of $\mu_{t}$ nor the image of a branch point of $\rho_{t})$, $\beta_{i}$, $i=1$ or $2$ its preimages under $\mu_{t}$ and $\gamma_{i,j}$, $(i,j)\in\left\{1,2\right\}^{2}$, the preimages of the $\beta_{i}$ under $\rho_{t}$, as shown in the diagram:
$$\xymatrix@R=0pt@C=3pt{
C_{t} \ar[rr]^{\rho_{t}} & & E_{t} \ar[rr]^{\mu_{t}} & & \mathbb{P}^{1} \\
\gamma_{1,1} \ar[rr] & & \beta_{1} \ar[rr] & & \alpha.\\
\gamma_{1,2}\ar[rru] & & & & \\
\gamma_{2,1} \ar[rr]& & \beta_{2}\ar[rruu] & & \\
\gamma_{2,2} \ar[rru] & & & &
}$$
This gives the following diagram for points in $C_{t}^{\vee}$ and $E_{t}^{\vee}$:
$$\xymatrix@R=0pt{
C_{t}^{\vee} \ar[r]^{\rho_{t}^{\vee}} & E_{t}^{\vee} \ar[r]^{\mu_{t}^{\vee}} & \mathbb{P}^{1} \\
\gamma_{1,1}+\gamma_{2,1} \ar[r] &\overline{\gamma_{1,1}+\gamma_{2,1}}= \overline{\gamma_{1,2}+\gamma_{2,2}}\ar[r] & \alpha.\\
\gamma_{1,2}+\gamma_{2,2} \ar[ru] & & \\
\gamma_{1,1}+\gamma_{2,2} \ar[r] & \overline{\gamma_{1,1}+\gamma_{2,2}}= \overline{\gamma_{1,2}+\gamma_{2,1}}\ar[ruu] & \\
\gamma_{1,2}+\gamma_{2,1} \ar[ru] & & 
}$$
Like $\rho_{t}$, $\mu_{t}$ the maps $\rho_{t}^{\vee}$, $\mu_{t}^{\vee}$ are double covers:
$$C_{t}^{\vee}\stackrel{2:1}{\rightarrow}E_{t}^{\vee}\stackrel{2:1}{\rightarrow}\mathbb{P}^{1}.\ \ \ \ \ \ \ \ \ (*)$$
Barth \cite{Barth} calls this way to obtain $C_{t}^{\vee}$ Pantazis's bigonal construction (see \cite{Panta}, p. 304).
\begin{prop}
The abelian varieties $\Prym(C_{t},\tau_{t})$ and $\Prym(C_{t}^{\vee},\tau_{t}^{\vee})$ are dual to each other in such a way that $C_{t}^{\vee}$ (resp. $C_{t}$) embeds in $\Prym(C_{t},\tau_{t})$ (resp. $\Prym(C_{t}^{\vee},\tau_{t}^{\vee})$) as a theta-divisor of a polarisation of type (1,2).
\end{prop}
\begin{proof}
See \cite{Panta} Proposition 1 Section 3 page 307.
\end{proof}
Now, we will show that $\Prym(C_{t}^{\vee},\tau_{t}^{\vee})$ and $\Prym(\widetilde{C_{t}},\widetilde{\tau_{t}})$ are isomorphic.
To this end, we will look what happens when $\alpha$ is a branch point. 
\item Step 2: \emph{The ramification of the double covers of the diagram (*)}

We will denote by $(a_{i})_{1\leq i\leq 4}$ the branch points of $\mu_{t}$, $(b_{i})_{1\leq i\leq 4}$ their preimages in $E_{t}$, $(e_{i})_{1\leq i\leq 4}$ the branch points of $\rho_{t}$, $(p_{i})_{1\leq i\leq 4}$ their images in $\mathbb{P}^{1}$, $(e_{i}')_{1\leq i\leq 4}$ the other preimages of the $p_{i}$ in $E_{t}$, $(c_{i})_{1\leq i\leq 4}$ the preimages of the $(e_{i})_{1\leq i\leq 4}$ in $C_{t}$, $(b_{i,j})_{1\leq i\leq 4,1\leq j\leq 2}$ the preimages of the $(b_{i})_{1\leq i\leq 4}$ in $C_{t}$ and $(c_{i j}')_{1\leq i\leq 4,1\leq j\leq 2}$ the preimages of the $(e_{i}')_{1\leq i\leq 4}$ in $C_{t}$, as in the following diagram: 
$$\xymatrix@R=0pt@C=3pt{
C_{t} \ar[rr]^{\rho_{t}} & & E_{t} \ar[rr]^{\mu_{t}} & & \mathbb{P}^{1} \\
b_{i,j} \ar[rr] & & b_{i} \ar[rr] & & a_{i}\\
c_{i} \ar[rr] & & e_{i} \ar[rr] & & p_{i}.\\
c'_{i,j} \ar[rr] & & e'_{i} \ar[rru]& & 
}$$
We have $\tau_{t}(b_{i,1})= b_{i,2}$, $\tau_{t}(c'_{i,1})= c'_{i,2}$ for $\mu_{t}^{-1}(a_{i})=\left\{b_{i}\right\}$ and $\rho_{t}^{-1}(\mu_{t}^{-1}(a_{i}))$ $=\left\{b_{i,1},b_{i,2}\right\}$, for all $i\in \left\{1,2,3,4\right\}$. 
So we see that the ramification points of $\rho_{t}^{\vee}$ are the pairs $b_{i,1}+b_{i,2}$ $1\leq i\leq 4$. We have also $\mu_{t}^{-1}(p_{i})=\left\{e_{i},e'_{i}\right\}$ and $\rho_{t}^{-1}(\mu_{t}^{-1}(p_{i}))=\left\{c_{i},c'_{i,1},c'_{i,2}\right\}$. The involution $\tau_{t}^{\vee}$ exchanges $c_{i}+c'_{i,1}$ for $c_{i}+c'_{i,2}$. And the classes $c_{i}^{\vee}$ of the pairs $c_{i}+c'_{i,1}$ and $c_{i}+c'_{i,2}$  in $E_{t}^{\vee}$, $1\leq i\leq 4$, are the ramification points of $\mu_{t}^{\vee}$. We show this in the diagram:

$$\xymatrix@R=0pt{
C_{t}^{\vee} \ar[r]^{\rho_{t}^{\vee}} & E_{t}^{\vee} \ar[r]^{\mu_{t}^{\vee}} & \mathbb{P}^{1} \\
c_{i}+c'_{i,j}\ar[r] & \overline{c_{i}+c'_{i,1}}=\overline{c_{i}+c'_{i,2}} \ar[r]& p_{i}\\
b_{i,1}+b_{i,2} \ar[r]& \overline{b_{i,1}+b_{i,2}} \ar[r] & a_{i}.\\
2b_{i,j} \ar[r] & \overline{2b_{i,1}}=\overline{2b_{i,2}} \ar[ru] & 
}$$

\item Step 3: \emph{Conclusion}

We see that the maps $\mu_{t}^{\vee}$ and $\widetilde{\mu_{t}}$ have the same branch points in $\mathbb{P}^{1}$ by Step 2. This gives an isomorphism $\varphi_{t}$ between $E_{t}^{\vee}$ and $\widetilde{E_{t}}$. Now, we want that $\varphi_{t}$ sends the branch points of $\rho_{t}^{\vee}$ to the branch points of $\widetilde{\rho_{t}}$ to build an isomorphism between $\widetilde{C_{t}}$ and $C_{t}^{\vee}$. 

To show this, we map $E_{t}^{\vee}$ into $\widetilde{X}$ by $\varphi_{t}$. 
By step 2, $\widetilde{\mu}$ sends the branch points of $\rho_{t}^{\vee}$ on $t\cap \overline{B_{0}}=\left\{a_{1},...,a_{4}\right\}$ for all $t\in \mathcal{U}:=\mathbb{P}^{2\vee}\setminus \overline{B_{0}}^{\vee}$.
The group $\pi_{1}(\mathcal{U})$ acts by monodromy on the four points $\left\{a_{1},...,a_{4}\right\}$. This action is transitive because of the irreducibility of $\overline{B_{0}}$.
If we assume for a moment that $k$ of the 4 branch points of $\rho_{t}^{\vee}$ are on $\widetilde{B_{0}}$, and $4-k$ on $\widetilde{B_{0}'}$, $1\leq k\leq3$, then we see that the image of $\pi_{1}(\mathcal{U})$ is contained in $\mathscr{S}_{k}\times\mathscr{S}_{4-k}\subset\mathscr{S}_{4}$, which contradicts the transitivity. Hence the branch points of $\rho_{t}^{\vee}$  are all on $\widetilde{B_{0}}$ or all on $\widetilde{B_{0}'}$.
If they are on $\widetilde{B_{0}'}$, we just need to compose $\varphi_{t}$ with $\widetilde{i}$ (the involution on $\widetilde{X}$ we have defined in the very beginning) to obtain an isomorphism between $E_{t}^{\vee}$ and $\widetilde{E_{t}}$ which sends the branch points of $\rho_{t}^{\vee}$ to the branch points of $\widetilde{\rho_{t}}$. Denote this isomorphism by $\varphi_{t}$. Then we obtain the commutative diagram

$$\xymatrix{
\widetilde{C_{t}} \eq[d]\ar[rr]^{\widetilde{\rho_{t}}}& &\widetilde{E_{t}}\eq[d]_{\varphi_{t}}\ar[rr]^{\widetilde{\mu_{t}}}& & \mathbb{P}^{1}\\
C_{t}^{\vee}\ar[rr]^{\rho_{t}^{\vee}}& & E_{t}^{\vee}\ar[rru]^{\mu_{t}^{\vee}}& & },$$
which implies $\Prym(C_{t}^{\vee},\tau_{t}^{\vee})\simeq\Prym(\widetilde{C_{t}},\widetilde{\tau_{t}})$.
\end{itemize}
\end{proof}

\section{Relation between $S$ and $\widetilde{S}$}
It is a natural question to know whether the two K3 surfaces $S$, $\widetilde{S}$ are isomorphic or not.
We are going to prove that the answer is no for generic $S$. The meaning of "generic" will be clear from what follows.
\subsection{Definition of $\mathfrak{M}_{r,a,\delta}$}
Before giving the definition of $\mathfrak{M}_{r,a,\delta}$, we need some notions and some notation about lattices.
For a lattice $M$, we will denote its rank by $r(M)$. The signature of $M$ will be denoted by $\sign{M}=(b^{+}(M),b^{-}(M))$.
A lattice $M$ is \textit{Lorentzian} if $\sign{M}=(1,r(M)-1)$. We will denote by $M^{\vee}$ the dual of $M$ and by $A_{M}=M^{\vee}/ M$ the discriminant group. An even lattice $M$ is 2-\textit{elementary} if there is an integer $a$ with $A_{M}\simeq(\mathbb{Z}/2\mathbb{Z})^{a}$; then we set $a(M)=\dim_{\mathbb{Z}/2\mathbb{Z}}A_{M}$. We also define $\delta(M)=0$ if $x^{2}\in \mathbb{Z}\ \forall x\in M^{\vee}$,
otherwise $\delta(M)=1$. The triple $(\sign(M),a(M),\delta(M))$ determines the isometry class of an indefinite even 2-elementary lattice M by Theorem 3.6.2 of \cite{Lattice}.

Let $S$ be a K3 surface equipped with an antisymplectic involution $\tau:S\rightarrow S$. Let $P=\Pic(S)^{\tau}$. Then $P$ is a primitive 2-elementary Lorentzian sublattice of $H^{2}(S,\mathbb{Z})$ endowed with the cup product (see for instance Lemma 1.3 of \cite{Yo}). Let $(r,a,\delta)$ be a triple of integers. A couple $(S,\tau)$ is called a \textit{2-elementary K3 surface of type $(r,a,\delta)$} if $(r(P),a(P),\delta(P))=(r,a,\delta)$. We denote by $\mathfrak{M}_{r,a,\delta}$ the moduli space of isomorphism classes of 2-elementary K3 surfaces of type $(r,a,\delta)$. 

For a K3 surface $S$, $H^{2}(S,\mathbb{Z})$ endowed with the cup-product pairing is isometric to the K3 lattice $L=E_{8}(-1)^{2}\oplus U^{3}$. An isometry of lattices $\alpha: H^{2}(S,\mathbb{Z})\cong L$ is called a \textit{marking} of $S$. The pair $(S,\alpha)$ is called a \textit{marked K3 surface}. Let $M\subset L$ be a primitive 2-elementary Lorentzian sublattice. Let $I_{M}$ be the involution on $M\oplus M^{\bot}$ defined by $$I_{M}(x,y)=(x,-y),\ \ \ \ \ \ \ \ (x,y)\in M\oplus M^{\bot}.$$
Then $I_{M}$ extends uniquely to an involution on $L$ by Corollary 1.5.2 of \cite{Lattice}. A K3 surface equipped with an anti-symplectic holomorphic involution $\tau: S\rightarrow S$ is called a \textit{2-elementary K3 surface of type M} if there exists a marking $\alpha$ of $S$ satisfying $$\tau^{*}=\alpha^{-1}\circ I_{M}\circ\alpha.$$
Such a marking will be called a \textit{M-marking} of $(S,\tau)$. We note that $\alpha((\Pic(S))^{\tau})$ $=M$.
Now we will show that a 2-elementary K3 surface of type $(r,a,\delta)$ and a 2-elementary K3 surface of type M where $(r(M),a(M),\delta(M))=(r,a,\delta)$, are equivalent notions. 
\begin{lemme}\label{lemme}
Let $\varphi:N_{1}\simeq N_{2}$ be an isometry between two 2-elementary sublattices of $L$. We assume that $\sign N_{1}=\sign N_{2}= (2,x)$ where $x$ is an integer, then we can extend $\varphi$ to an isometry of $L$.
\end{lemme}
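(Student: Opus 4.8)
The plan is to reduce the statement to Nikulin's general criterion for extending an isometry of a primitive sublattice of an even lattice to the whole lattice. Recall (Nikulin, \cite{Lattice}, Corollary 1.5.2 and the surrounding discussion) that if $N\hookrightarrow L$ is a primitive embedding of even lattices, then an isometry of $N$ extends to $L$ provided it acts compatibly on the discriminant form $A_N$ with respect to the gluing data identifying $A_N$ with $A_{N^\perp}$ (up to sign); in particular, any isometry of $N$ that extends to an isometry of the discriminant form $q_{A_N}$ which is realized by an isometry of $N^\perp$ will extend. So the first step is to set $N_1^\perp$ and $N_2^\perp$ inside $L$ and observe that, since $L = E_8(-1)^2\oplus U^3$ is unimodular, we have canonical isomorphisms $A_{N_i}\cong A_{N_i^\perp}$ (with opposite discriminant quadratic forms), and $q_{N_i^\perp}\cong -q_{N_i}$.

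Next I would use the hypotheses. Since $N_1\simeq N_2$, they have the same signature $(2,x)$, the same rank, and isometric discriminant forms; both are 2-elementary, so $A_{N_i}\cong(\mathbb{Z}/2\mathbb{Z})^{a}$ for a common $a$. The orthogonal complements $N_i^\perp$ then have signature $(b^+ , b^-) = (2 + (\text{something}),\,\dots)$ — more precisely, since $\operatorname{sign}L = (3,19)$, each $N_i^\perp$ has signature $(1, 19-x)$, hence is an \emph{indefinite} even 2-elementary lattice with discriminant form $-q_{N_i}$, and the two complements have the same invariants $(\operatorname{sign}, a, \delta)$. By Nikulin's Theorem 3.6.2 of \cite{Lattice} (quoted just above in the excerpt), an indefinite even 2-elementary lattice is determined up to isometry by the triple $(\operatorname{sign}, a, \delta)$; therefore $N_1^\perp\simeq N_2^\perp$. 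Now choose such an isometry $\psi: N_1^\perp \simeq N_2^\perp$ and the given $\varphi: N_1\simeq N_2$. The pair $(\varphi,\psi)$ is an isometry $N_1\oplus N_1^\perp \simeq N_2\oplus N_2^\perp$ of finite-index sublattices of $L$; to promote it to an isometry of $L$ it must respect the overlattice, i.e.\ carry the glue subgroup $L/(N_1\oplus N_1^\perp)\subset A_{N_1}\oplus A_{N_1^\perp}$ to $L/(N_2\oplus N_2^\perp)$. Since $L$ is unimodular the glue subgroup is the graph of an anti-isometry $\gamma_i: A_{N_i}\to A_{N_i^\perp}$, and the condition becomes $\psi_* \circ \gamma_1 = \gamma_2\circ \varphi_*$ on discriminant groups.

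The remaining, and genuinely substantive, step is to arrange that $\psi$ can be chosen so that this compatibility holds. Here I would exploit the surjectivity of the map $O(N_i^\perp)\to O(q_{N_i^\perp})$ for indefinite 2-elementary lattices (again a consequence of Nikulin's results, \cite{Lattice} Theorem 3.6.3): given any initial choice of $\psi$, the discrepancy $\gamma_2^{-1}\circ\psi_*\circ\gamma_1\circ\varphi_*^{-1}$ is an automorphism of the discriminant form $q_{A_{N_2}}$ transported to $A_{N_2^\perp}$, and since it is realized by an honest isometry of $N_2^\perp$ we may correct $\psi$ by that isometry to kill the discrepancy. Composing $\varphi\oplus\psi$ with this correction yields an isometry of $N_1\oplus N_1^\perp$ onto $N_2\oplus N_2^\perp$ matching the glue, hence extending to the desired isometry of $L$.

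I expect the main obstacle to be the bookkeeping with discriminant forms: making sure that "2-elementary" gives genuine freedom (so that the two complements really do share the triple $(\operatorname{sign},a,\delta)$, including $\delta$, which requires a short check that $\delta(N^\perp)=\delta(N)$ for a primitive 2-elementary sublattice of a unimodular lattice), and verifying that the relevant maps $O(N_i^\perp)\to O(q_{N_i^\perp})$ are surjective under the indefiniteness hypothesis $\operatorname{sign}N_i=(2,x)$ with $x\geq 1$. Everything else is a direct application of \cite{Lattice}, Theorems 3.6.2–3.6.3 and Corollary 1.5.2.
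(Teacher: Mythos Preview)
Your proposal is correct and follows essentially the same route as the paper's proof: compute $\operatorname{sign}N_i^\perp=(1,19-x)$, use unimodularity of $L$ to get the anti-isometries $\gamma_i:A_{N_i}\to A_{N_i^\perp}$ and hence equality of the invariants $(a,\delta)$ for the complements, apply Theorem~3.6.2 of \cite{Lattice} to produce $\psi:N_1^\perp\simeq N_2^\perp$, correct $\psi$ via the surjectivity $\mathcal{O}(N_2^\perp)\to\mathcal{O}(q_{N_2^\perp})$ from Theorem~3.6.3, and conclude with Corollary~1.5.2. The paper's argument is exactly this, with the discrepancy written as the automorphism $\gamma_{N_2}\circ\overline{\varphi}\circ\gamma_{N_1}^{-1}\circ\overline{\psi}^{-1}$ of $A_{N_2^\perp}$ rather than its transport to $A_{N_2}$, and it applies Theorem~3.6.3 using that $N_2^\perp$ is Lorentzian (so the indefiniteness you need is that of $N_i^\perp$, i.e.\ $x\leq 18$, not $x\geq 1$).
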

\begin{proof}
We will use Corollary 1.5.2 of \cite{Lattice}. We start by showing an isometry between $N_{1}^{\bot}$ and $N_{2}^{\bot}$. We have $\sign(N_{1}^{\bot})=\sign(N_{2}^{\bot})=(1,19-x)$ (because $\sign(L)=(3,19)$).
Since $L$ is unimodular, we have an isomorphism $\gamma_{N_{1}}: A_{N_{1}}\rightarrow A_{N_{1}^{\bot}}$ with $q_{N_{1}^{\bot}}\circ\gamma_{N_{1}}=-q_{N_{1}}$ and an isomorphism $\gamma_{N_{2}}: A_{N_{2}}\rightarrow A_{N_{2}^{\bot}}$ with $q_{N_{2}^{\bot}}\circ\gamma_{N_{2}}=-q_{N_{2}}$ (see \cite{Lattice} section 1).
This implies $(a(N_{1}^{\bot}),\delta(N_{1}^{\bot}))=(a(N_{2}^{\bot}),\delta(N_{2}^{\bot}))$. Then, by Theorem 3.6.2 of \cite{Lattice} there is an isometry $\psi: N_{1}^{\bot}\rightarrow N_{2}^{\bot}$. 

One the other hand, $\varphi$ (resp. $\psi$) induces an isometry $\overline{\varphi}: A_{N_{1}}\rightarrow A_{N_{2}}$ (resp. $\overline{\psi}: A_{N_{1}^{\bot}}\rightarrow A_{N_{2}^{\bot}}$). Now we take the following composition:
$$\gamma_{N_{2}}\circ\overline{\varphi}\circ\gamma_{N_{1}}^{-1}\circ\overline{\psi^{-1}},$$
which is an auto-isometry of $A_{N_{2}^{\bot}}$. 
Since $N_{2}^{\bot}$ is a 2-elementary Lorentzian sublattice, Theorem 3.6.3 of \cite{Lattice} gives us an isometry $\chi\in \mathcal{O}(N_{2}^{\bot})$ with $\overline{\chi}=\gamma_{N_{2}}\circ\overline{\varphi}\circ\gamma_{N_{1}}^{-1}\circ\overline{\psi^{-1}}$.
Hence $$\gamma_{N_{2}}\circ\overline{\varphi}=\overline{\chi\circ\psi}\circ\gamma_{N_{1}}.$$
By Corollary 1.5.2 of \cite{Lattice}, $\varphi$ extends to an isometry of $L$.
\end{proof}

\textbf{Remark}: The same result holds if $\sign{N_{i}}=(1,x)$, as we are going to see in the proof of the next proposition.

\begin{prop}\label{truc}
A K3 surface is a 2-elementary K3 surface of type $(r,a,\delta)$ if and only if it is a 2-elementary K3 surface of type M for some primitive 2-elementary Lorentzian sublattice $M$ with $(r(M),a(M),\delta(M))=(r,a,\delta)$.
\end{prop}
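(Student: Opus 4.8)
The plan is to prove the two implications separately; both are lattice-theoretic once the Hodge theory of an antisymplectic involution is in place, so no surjectivity-of-periods input is needed. For the implication ``of type $M$ $\Rightarrow$ of type $(r,a,\delta)$'', I would take an $M$-marking $\alpha$ of $(S,\tau)$, so $\tau^{*}=\alpha^{-1}\circ I_{M}\circ\alpha$. Since $\tau$ is antisymplectic, $\tau^{*}$ acts by $-\mathrm{id}$ on $H^{2,0}(S)\oplus H^{0,2}(S)$, hence $H^{2}(S,\mathbb{Z})^{\tau^{*}}\subseteq H^{1,1}(S)\cap H^{2}(S,\mathbb{Z})=\Pic(S)$ by the Lefschetz $(1,1)$-theorem; since its elements are $\tau^{*}$-fixed, it is exactly $P=\Pic(S)^{\tau}$. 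On the lattice side, primitivity of $M$ gives $L^{I_{M}}=L\cap(M\otimes\mathbb{Q})=M$, so $\alpha(P)=\alpha\big(H^{2}(S,\mathbb{Z})^{\tau^{*}}\big)=L^{I_{M}}=M$ (the remark already recorded above). Therefore $(r(P),a(P),\delta(P))=(r(M),a(M),\delta(M))=(r,a,\delta)$.

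For the converse, let $(S,\tau)$ be of type $(r,a,\delta)$, so $P=\Pic(S)^{\tau}$ is a primitive $2$-elementary sublattice of $H^{2}(S,\mathbb{Z})$, Lorentzian by Lemma 1.3 of \cite{Yo}, with $(r(P),a(P),\delta(P))=(r,a,\delta)$. I would fix an arbitrary marking $\alpha_{0}$ of $S$ and put $M=N_{1}:=\alpha_{0}(P)$, a primitive $2$-elementary Lorentzian sublattice of $L$ with the prescribed triple. The key step is to check that $\alpha_{0}$ is then automatically an $M$-marking. Indeed, $\tau^{*}$ is an involutive isometry of $H^{2}(S,\mathbb{Z})$ with $(+1)$-eigenlattice $P$; the $(-1)$-eigenspace of an involutive isometry being the orthogonal complement of the $(+1)$-eigenspace, $\tau^{*}$ acts by $-\mathrm{id}$ on $P^{\perp}$. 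Transporting by $\alpha_{0}$, the isometry $\alpha_{0}\circ\tau^{*}\circ\alpha_{0}^{-1}$ of $L$ restricts to $(x,y)\mapsto(x,-y)$ on the finite-index sublattice $M\oplus M^{\perp}$, hence coincides with the unique such extension $I_{M}$ (Corollary 1.5.2 of \cite{Lattice}). This establishes the statement with $M=N_{1}$. To upgrade it to an arbitrary prescribed primitive $2$-elementary Lorentzian sublattice $M\subset L$ of type $(r,a,\delta)$ (which is what identifies $\mathfrak{M}_{r,a,\delta}$ with Yoshikawa's moduli space of type $M$), one uses that by Theorem 3.6.2 of \cite{Lattice} there is an isometry $N_{1}\simeq M$, extended by the Remark after Lemma~\ref{lemme} (the signature $(1,x)$ case) to some $g\in O(L)$ with $g(N_{1})=M$; then $\alpha:=g\circ\alpha_{0}$ works, since $g\circ I_{N_{1}}\circ g^{-1}=I_{g(N_{1})}=I_{M}$.

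The step I expect to need the most care is the identification $\alpha_{0}\circ\tau^{*}\circ\alpha_{0}^{-1}=I_{M}$: it rests on the geometric input that an antisymplectic involution of a K3 surface has invariant lattice exactly $\Pic(S)^{\tau}$ and acts by $-\mathrm{id}$ on its orthogonal complement, together with the fact that an isometry of $L$ is pinned down by its restriction to a finite-index sublattice, forcing it to be the distinguished extension $I_{M}$. The remaining content is routine bookkeeping with the invariants $(r,a,\delta)$ and an appeal to the classification of indefinite even $2$-elementary lattices (Theorem 3.6.2 of \cite{Lattice}).
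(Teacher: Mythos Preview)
Your proof is correct and uses the same lattice-theoretic toolkit as the paper (Nikulin's Theorem~3.6.2 and the extension result of Lemma~\ref{lemme}/its Remark), but you organise the converse implication slightly differently. The paper fixes the target $M$ from the outset, chooses an isometry $\varphi:P^{\bot}\to M^{\bot}$ between the signature-$(2,x)$ orthogonals, extends $\varphi$ to an isometry $\widetilde{\varphi}$ of $L$ via Lemma~\ref{lemme}, and then checks directly that $\tau^{*}=\widetilde{\varphi}^{-1}\circ I_{M}\circ\widetilde{\varphi}$. You instead first observe that \emph{any} marking $\alpha_{0}$ is automatically an $N_{1}$-marking for $N_{1}:=\alpha_{0}(P)$ (via the eigenlattice identification $(H^{2}(S,\mathbb{Z})^{\tau^{*}})^{\bot}=$ the $(-1)$-eigenlattice), and only then transport $N_{1}$ to a prescribed $M$ using the Lorentzian version of the extension (the Remark after Lemma~\ref{lemme}). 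Both routes are equivalent; yours makes the role of the eigenspace decomposition of $\tau^{*}$ more explicit, while the paper's is marginally shorter since it skips the intermediate $N_{1}$.
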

\begin{proof}
It is obvious that a 2-elementary K3 surface $(S,\tau)$ of type $M$ with $(r(M),a(M),\delta(M))$ $=(r,a,\delta)$ belongs to $\mathfrak{M}_{r,a,\delta}$.
So, we will show the other implication. Let $M\subset L$ with $(r(M),a(M),\delta(M))=(r,a,\delta)$ and let $(S,\tau)\in\mathfrak{M}_{r,a,\delta}$, we will find a $M$-marking of $S$. Let $P=\Pic(S)^{\tau}$, we can assume that $P$ is a sublattice of $L$, it suffices to take its image  by some marking of $S$.
We have $(r(P),a(P),\delta(P))=(r,a,\delta)$, then by Theorem 3.6.2 of \cite{Lattice}, we have an isometry $\psi: P\rightarrow M$. Moreover we have $\sign(P^{\bot})=\sign(M^{\bot})$ and since $L$ is unimodular we have $(a(P^{\bot}),\delta(P^{\bot}))=(a(M^{\bot}),\delta(M^{\bot}))$.
Once more, it follows by Theorem 3.6.2 of \cite{Lattice} that there is an isometry $\varphi: P^{\bot}\rightarrow M^{\bot}$. 
By Lemma \ref{lemme} this isometry extends to an isometry of $L$. We denote by $\widetilde{\varphi}$ this isometry,
we have $\tau^{*}=\widetilde{\varphi}^{-1}\circ I_{M}\circ\widetilde{\varphi}$. Indeed, let $x\in H^{2}(S,\mathbb{Z})$, $x=\frac{p+t}{2}$
where $p\in P$ and $t\in P^{\bot}$, hence,
$$\widetilde{\varphi}^{-1}\circ I_{M}\circ\widetilde{\varphi}(\frac{p+t}{2})=\widetilde{\varphi}^{-1}\circ I_{M}(\frac{\widetilde{\varphi}(p)+\widetilde{\varphi}(t)}{2})=\widetilde{\varphi}^{-1}(\frac{\widetilde{\varphi}(p)-\widetilde{\varphi}(t)}{2})=\frac{p-t}{2}.$$
\end{proof}

The moduli space of 2-elementary K3 surface was introduced by Nikulin in \cite{Nikulin0}, see also \cite{Nikulin} and \cite{Yo} Section 1 for more details.

\subsection{A Torelli Theorem for 2-elementary K3 surfaces}
For a better understanding of this moduli space we will give a kind of Torelli theorem for it (see \cite{Yo} and \cite{Yoshikawa} for more details).
We need some more notation.

Let $(S,\alpha)$ be a marked K3 surface. Recall the definition of the period map for marked K3 surfaces: the period of $(S,\alpha)$ is defined to be 
$$\pi(S,\alpha):=[\alpha(\eta)]\in\mathbb{P}(L\otimes\mathbb{C}),\ \ \ \ \ \eta\in H^{0}(S,\omega_{S})\setminus\left\{0\right\}.$$
Let $\Lambda$ be a lattice of signature $(2,n)$. We define 
$$\Omega_{\Lambda}:=\left\{[x]\in\mathbb{P}(\Lambda\otimes\mathbb{C}); \left\langle x,x\right\rangle=0, \left\langle x,\overline{x}\right\rangle>0\right\}.$$
Let $\Delta_{\Lambda}:=\left\{x\in \Lambda; \left\langle x,x\right\rangle=-2\right\}$.
\newline
For $\lambda\in \Lambda\otimes\mathbb{R}$, set $H_{\lambda}:=\left\{[x]\in\Omega_{\Lambda};\left\langle x,\lambda\right\rangle=0\right\}$. We define the discriminant locus of $\Omega_{\Lambda}$ by 
$$\mathcal{D}_{\Lambda}:=\sum_{d\in \Delta_{\Lambda}/\pm1}H_{d}.$$
Assume that $\Lambda$ is a primitive 2-elementary sublattice of $L$, with $\Lambda^{\bot}$ Lorentzian. 
Then we set
$$\Gamma(\Lambda):=\left\{g\in\mathcal{O}(L),\ I_{\Lambda^{\bot}}g=gI_{\Lambda^{\bot}}\right\},$$
$$\Gamma_{\Lambda}:=\left\{g_{|\Lambda}\in\mathcal{O}(\Lambda);\ g\in\Gamma(\Lambda) \right\},$$
$$\Omega_{\Lambda}^{°}:=\Omega_{\Lambda}\setminus\mathcal{D}_{\Lambda},\ \ \ \ \ \ \ \mathcal{M}_{\Lambda}^{°}:=\Omega_{\Lambda}^{°}/\Gamma_{\Lambda}.$$
The following theorem, due to Yoshikawa (\cite{Yo} Theorem 1.8) can be thought of as a Torelli Theorem for 2-elementary K3 surface:
\begin{thm}
Via the period map, the analytic space $\mathcal{M}_{M^{\bot}}^{°}$ is a coarse moduli space of 2-elementary K3 surfaces of type $M$.
\end{thm}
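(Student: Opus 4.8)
The plan is to deduce the theorem from the strong Torelli theorem and the surjectivity of the period map for K3 surfaces, together with the lattice theory of \cite{Lattice} already invoked above. Throughout, write $\Lambda=M^{\bot}$, so $\Omega_{\Lambda}$ has signature $(2,20-r)$, and note $I_{\Lambda^{\bot}}=I_{M}$ acts as $+1$ on $M$ and as $-1$ on $\Lambda$.

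First I would check that the period map is well defined with values in $\mathcal{M}_{\Lambda}^{\circ}$. If $(S,\tau)$ is a $2$-elementary K3 surface of type $M$ with an $M$-marking $\alpha$, then $\tau$ anti-symplectic forces $\tau^{*}\eta=-\eta$ for $\eta\in H^{0}(S,\omega_{S})\setminus\{0\}$, so $\alpha(\eta)$ lies in the $(-1)$-eigenspace $\Lambda\otimes\mathbb{C}$ of $I_{M}$; combined with $\langle\eta,\eta\rangle=0$ and $\langle\eta,\overline{\eta}\rangle>0$ this gives $[\alpha(\eta)]\in\Omega_{\Lambda}$. It avoids $\mathcal{D}_{\Lambda}$: a class $d\in\Lambda$ with $d^{2}=-2$ and $\langle\alpha(\eta),d\rangle=0$ would yield $\alpha^{-1}(d)\in\Pic(S)$ with $\tau^{*}\alpha^{-1}(d)=-\alpha^{-1}(d)$; but Riemann--Roch on a K3 makes $\alpha^{-1}(d)$ or its opposite effective, while the sum of a non-zero effective divisor with its $\tau$-pullback is effective, non-zero, and of class $0$ --- impossible. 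Finally, two $M$-markings of $(S,\tau)$ differ by an element of $\mathcal{O}(L)$ commuting with $I_{M}$, i.e. by an element of $\Gamma(\Lambda)$, whose restriction to $\Lambda$ lies in $\Gamma_{\Lambda}$; hence the period descends to a well-defined map $\overline{\pi}$ from isomorphism classes of $2$-elementary K3 surfaces of type $M$ to $\mathcal{M}_{\Lambda}^{\circ}$.

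Next, injectivity. If $(S_{1},\tau_{1})$ and $(S_{2},\tau_{2})$ have the same image under $\overline{\pi}$, choose $M$-markings $\alpha_{1},\alpha_{2}$ with $[\alpha_{1}(\eta_{1})]=[\alpha_{2}(\eta_{2})]$. Then $\phi=\alpha_{1}^{-1}\circ\alpha_{2}$ is a Hodge isometry $H^{2}(S_{2},\mathbb{Z})\to H^{2}(S_{1},\mathbb{Z})$ conjugating $\tau_{2}^{*}$ to $\tau_{1}^{*}$; after composing $\phi$ with an element of the Weyl group of $S_{1}$ commuting with $\tau_{1}^{*}$ --- generated by reflections in $(-2)$-classes of $\alpha_{1}^{-1}(M)$ and of $\alpha_{1}^{-1}(\Lambda)\cap\Pic(S_{1})$, these being the walls compatible with the involution --- one arranges $\phi$ to carry the Kähler cone of $S_{2}$ onto that of $S_{1}$ without destroying the intertwining property. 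Strong Torelli then gives an isomorphism $f\colon S_{1}\to S_{2}$ with $f^{*}=\phi$, and since $f\circ\tau_{1}\circ f^{-1}$ and $\tau_{2}$ induce the same map on $H^{2}(S_{2},\mathbb{Z})$, Torelli forces $f\circ\tau_{1}\circ f^{-1}=\tau_{2}$, so $(S_{1},\tau_{1})\simeq(S_{2},\tau_{2})$. For surjectivity, given $[x]\in\Omega_{\Lambda}^{\circ}$, surjectivity of the period map for marked K3 surfaces produces $(S,\alpha)$ with $H^{2,0}(S)=\alpha^{-1}(\mathbb{C}x)$. As $x\in\Lambda\otimes\mathbb{C}$ we get $\alpha^{-1}(M)\subset H^{1,1}(S)\cap H^{2}(S,\mathbb{Z})=\Pic(S)$, and $g:=\alpha^{-1}\circ I_{M}\circ\alpha$ is a Hodge isometry acting as $+1$ on the Lorentzian lattice $\alpha^{-1}(M)$, hence preserving the positive cone; because $[x]\notin\mathcal{D}_{\Lambda}$ there is no $(-2)$-class of $\alpha^{-1}(\Lambda)$ orthogonal to the period, so after adjusting $\alpha$ by a suitable Weyl element one may take $g$ to preserve the Kähler cone of $S$. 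Strong Torelli yields a holomorphic involution $\tau$ with $\tau^{*}=g$, anti-symplectic since $I_{M}x=-x$, and $\alpha$ is an $M$-marking, so $(S,\tau)$ has type $M$ and period $[x]$. Combining this bijectivity with local Torelli (which makes $\overline{\pi}$ a local analytic isomorphism onto $\mathcal{M}_{\Lambda}^{\circ}$) gives the coarse moduli statement.

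The main obstacle is the passage, in both the injectivity and surjectivity steps, from a lattice-level isometry to an actual holomorphic map: one must show that the Weyl group acting on the positive cone can be used to bring the relevant isometry into the Kähler chamber \emph{while remaining compatible with the involution}. This is precisely where the removal of the discriminant locus $\mathcal{D}_{\Lambda}$ --- which annihilates the ``bad walls'' coming from $(-2)$-classes in $\Lambda=M^{\bot}$ --- and the Lorentzian nature of $M$ --- which pins down the correct component of the positive cone and supplies an invariant class of positive square --- are indispensable; the requisite control of the $\tau^{*}$-invariant part of the Weyl group and of the marking ambiguity is the content of the lattice results of \cite{Lattice} used above. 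Once the isometry is known to preserve the Kähler cone, the strong Torelli theorem does the rest.
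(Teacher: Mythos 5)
Your sketch reconstructs, in essentially the same way, the Torelli-type argument that the paper does not spell out but simply cites (Yoshikawa \cite{Yo}, Theorem 1.8, resting on the strong Torelli theorem of \cite{PSS}, \cite{BR} and on Nikulin \cite{Nikulin0}): well-definedness of the period and avoidance of $\mathcal{D}_{M^{\bot}}$ via the effective-divisor contradiction, then injectivity and surjectivity via strong Torelli after an involution-equivariant Weyl-group adjustment. The one imprecision is your description of that adjustment: the subgroup of the Weyl group commuting with $\tau^{*}$ is not generated by reflections in $(-2)$-classes of $M$ and of $M^{\bot}\cap\Pic(S)$ (the latter set is empty once the period avoids $\mathcal{D}_{M^{\bot}}$, and in general there are effective $(-2)$-classes with nonzero components in both $M$ and $M^{\bot}$, for which one must use the compositions $s_{\delta}s_{\tau^{*}\delta}$), and the transitivity of this equivariant group on the $\tau^{*}$-invariant chambers is precisely the content of \cite{Nikulin0} rather than of \cite{Lattice}.
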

\begin{proof}
The proof uses the classical Torelli Theorem for K3 surface (see \cite{PSS} and \cite{BR}) and results of Nikulin \cite{Nikulin0}.
\end{proof}
Next, Yoshikawa improves this result in \cite{Yoshikawa}, proving the following proposition (Proposition 11.2 in \cite{Yoshikawa}).
\begin{prop}
The following equality holds:
$$\Gamma_{M^{\bot}}=\mathcal{O}(M^{\bot}).$$
\end{prop}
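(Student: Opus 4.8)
The plan is to establish the non-trivial inclusion $\mathcal{O}(M^{\bot})\subseteq\Gamma_{M^{\bot}}$, the reverse one being immediate from the definition of $\Gamma_{M^{\bot}}$ (any $g\in\Gamma(M^{\bot})$ commutes with $I_{M}$, hence stabilises $M^{\bot}$ and restricts to an isometry of it). First I would unwind the definitions: since $M^{\bot}$ is primitive in the unimodular lattice $L$, we have $(M^{\bot})^{\bot}=M$, so $\Gamma(M^{\bot})=\{g\in\mathcal{O}(L):\ I_{M}g=gI_{M}\}$, where $I_{M}$ is the identity on $M$ and minus the identity on $M^{\bot}$. As $L\otimes\mathbb{Q}$ is the direct sum of the two eigenspaces of $I_{M}$, an isometry $g$ of $L$ commutes with $I_{M}$ if and only if it preserves $M$ and $M^{\bot}$, and $g(M)=M$ is equivalent to $g(M^{\bot})=M^{\bot}$ because $g(N^{\bot})=(g(N))^{\bot}$ for $g\in\mathcal{O}(L)$. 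Thus $\Gamma_{M^{\bot}}$ is exactly the image, under restriction, of the setwise stabiliser of $M^{\bot}$ in $\mathcal{O}(L)$. Finally, any $g\in\mathcal{O}(L)$ extending a given $\phi\in\mathcal{O}(M^{\bot})$ automatically satisfies $g(M^{\bot})=\phi(M^{\bot})=M^{\bot}$, hence automatically lies in $\Gamma(M^{\bot})$. So the statement reduces to the claim that \emph{every isometry of $M^{\bot}$ extends to an isometry of $L$}.

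This extension claim is precisely the special case $N_{1}=N_{2}=M^{\bot}$ of Lemma~\ref{lemme}: the lattice $M^{\bot}$ is $2$-elementary (its discriminant group is isomorphic to $A_{M}$), and since $\sign(L)=(3,19)$ and $M$ is Lorentzian of rank $r$, it has signature $(2,20-r)$, so the hypotheses of Lemma~\ref{lemme} are met. Hence any $\phi\in\mathcal{O}(M^{\bot})$ extends to some $g\in\mathcal{O}(L)$, which by the above lies in $\Gamma(M^{\bot})$, whence $\phi=g_{|M^{\bot}}\in\Gamma_{M^{\bot}}$. For completeness I would recall, as in the proof of Lemma~\ref{lemme}, the mechanism behind the extension: since $L$ is unimodular there is an isomorphism $\gamma_{M}:A_{M}\rightarrow A_{M^{\bot}}$ with $q_{M^{\bot}}\circ\gamma_{M}=-q_{M}$; the automorphism $\gamma_{M}^{-1}\circ\overline{\phi}\circ\gamma_{M}$ of $A_{M}$ preserves $q_{M}$, and since $M$ is indefinite and $2$-elementary, the surjectivity of $\mathcal{O}(M)\to\mathcal{O}(q_{M})$ (Theorem~3.6.3 of \cite{Lattice}) provides $\chi\in\mathcal{O}(M)$ inducing it; the pair $(\phi,\chi)$ is then compatible with $\gamma_{M}$, so Corollary~1.5.2 of \cite{Lattice} glues it into an isometry of $L$ restricting to $\phi$ on $M^{\bot}$.

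I expect essentially no obstacle beyond the bookkeeping with discriminant forms. The only substantive ingredient is Nikulin's surjectivity theorem for the map $\mathcal{O}(M)\to\mathcal{O}(q_{M})$ for indefinite $2$-elementary lattices, which is exactly the tool already invoked in the proof of Lemma~\ref{lemme}; indeed, the whole argument is that proof read with $N_{1}=N_{2}=M^{\bot}$, supplemented by the trivial observation that an extension of an automorphism of $M^{\bot}$ necessarily stabilises $M^{\bot}$. The only point deserving a line of care is that the surjectivity must be applied to $M$ (Lorentzian, $2$-elementary), not to $M^{\bot}$, and that the signature $(2,20-r)$ of $M^{\bot}$ indeed satisfies the hypothesis of Lemma~\ref{lemme}.
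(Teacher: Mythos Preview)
Your proposal is correct and follows exactly the line the paper indicates: reduce the equality to the extendability of any isometry of $M^{\bot}$ to $L$, and obtain this extension via Nikulin's surjectivity $\mathcal{O}(M)\twoheadrightarrow\mathcal{O}(q_{M})$ (Theorem~3.6.3 of \cite{Lattice}) together with the gluing statement Corollary~1.5.2 of \cite{Lattice}, which is precisely the mechanism of Lemma~\ref{lemme} with $N_{1}=N_{2}=M^{\bot}$. The paper's own proof is just this sketch (citing those two Nikulin results and pointing back to Lemma~\ref{lemme} and Proposition~\ref{truc}); your version simply spells it out more fully.
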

\begin{proof}
The proof uses Theorem 3.6.3 and Corollary 1.5.2 of \cite{Lattice}. The idea is the same as in the proof of Lemma \ref{lemme} and Proposition \ref{truc}.
\end{proof}
We thus obtain the following result.
Let $M\subset L$ be a primitive 2-elementary Lorentzian sublattice with $(r(M),a(M),\delta(M))=(r,a,\delta)$.
Define the map:
$$\begin{array}{ccccc}
 & \mathfrak{M}_{r,a,\delta}&\rightarrow &\Omega_{M^{\bot}}^{°}/\mathcal{O}(M^{\bot})\\
\overline{\varpi}_{r,a,\delta}: & & & \\
 & (S,\tau)& \mapsto & \mathcal{O}(M^{\bot})\cdot\pi(S,\alpha),
\end{array}$$
where $\alpha$ is a M-marking of $S$.
\begin{cor}\label{Torelli}
The map $\overline{\varpi}_{r,a,\delta}$
is an isomorphism.
\end{cor}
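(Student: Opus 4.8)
The plan is to show that $\overline{\varpi}_{r,a,\delta}$ is well-defined, bijective and bi-holomorphic by assembling the two results just quoted. First I would check that the map does not depend on the choice of $M$-marking $\alpha$: if $\alpha'$ is a second $M$-marking of $(S,\tau)$, then $\alpha'\circ\alpha^{-1}$ is an isometry of $L$ commuting with $I_M$ (since both markings conjugate $\tau^*$ to $I_M$), hence commuting with $I_{M^\bot}$ as well, so $g=\alpha'\circ\alpha^{-1}\in\Gamma(M^\bot)$ and its restriction to $M^\bot$ lies in $\Gamma_{M^\bot}=\mathcal{O}(M^\bot)$ by Yoshikawa's Proposition $11.2$. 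Since $\pi(S,\alpha')=g\cdot\pi(S,\alpha)$ on the $M^\bot$-component, the two periods define the same $\mathcal{O}(M^\bot)$-orbit. One also has to verify the period lands in $\Omega_{M^\bot}^{\circ}$ rather than just $\Omega_{M^\bot}$: the holomorphic $2$-form is anti-invariant, so $\alpha(\eta)\in M^\bot\otimes\mathbb{C}$, and it avoids $\mathcal{D}_{M^\bot}$ because a $(-2)$-class in $M^\bot$ orthogonal to $\eta$ would be an algebraic class, forcing $\Pic(S)^\tau\supsetneq M$, contradicting $r(P)=r$. Isomorphism invariance of the construction (replacing $(S,\tau)$ by an isomorphic pair) is immediate by transporting the marking.

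Next I would combine Proposition \ref{truc} with Yoshikawa's Theorem \ref{Torelli} proof ingredient $\mathcal{M}_{M^\bot}^{\circ}=\Omega_{M^\bot}^{\circ}/\Gamma_{M^\bot}$. By Proposition \ref{truc}, the set underlying $\mathfrak{M}_{r,a,\delta}$ coincides with the set of isomorphism classes of $2$-elementary K3 surfaces of type $M$, for our fixed $M$; and by Yoshikawa's Torelli theorem the latter is a coarse moduli space with period space $\mathcal{M}_{M^\bot}^{\circ}$. Using $\Gamma_{M^\bot}=\mathcal{O}(M^\bot)$ we rewrite this period space as $\Omega_{M^\bot}^{\circ}/\mathcal{O}(M^\bot)$, which is exactly the target of $\overline{\varpi}_{r,a,\delta}$. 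It remains to observe that the bijection furnished by the coarse moduli property is literally the map $\overline{\varpi}_{r,a,\delta}$: surjectivity is the surjectivity of the period map for $2$-elementary K3 surfaces of type $M$ (a lift of each period point to a marked K3 surface of type $M$ via the classical surjectivity of the period map, followed by checking the involution is holomorphic and anti-symplectic from the $M$-marking condition), and injectivity is the Torelli statement: if two pairs have the same $\mathcal{O}(M^\bot)$-orbit of periods, choose $M$-markings realizing the same period point, apply the classical Torelli theorem to get an isomorphism of K3 surfaces, and use the $M$-marking condition to see it intertwines the two involutions.

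Finally I would note that $\overline{\varpi}_{r,a,\delta}$ is an isomorphism of analytic spaces, not merely a bijection: the period map $\pi$ is holomorphic, the quotient $\Omega_{M^\bot}^{\circ}\to\Omega_{M^\bot}^{\circ}/\mathcal{O}(M^\bot)$ is the analytic quotient by a properly discontinuous group action, and the inverse is holomorphic because locally it is given by the family of marked K3 surfaces over a period domain (Kuranishi family), whose involution varies holomorphically — this is precisely the content built into Yoshikawa's $\mathcal{M}_{M^\bot}^{\circ}$ being a coarse moduli space in the analytic category.

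The main obstacle is the independence of the map on the choice of $M$-marking, i.e. controlling the ambiguity group: a priori two $M$-markings differ by an element of $\{g\in\mathcal{O}(L): gI_M=I_Mg\}$ restricted to $M^\bot$, and it is not formal that this restriction group equals all of $\mathcal{O}(M^\bot)$ — that is exactly why Yoshikawa's Proposition $11.2$ ($\Gamma_{M^\bot}=\mathcal{O}(M^\bot)$) is needed, and verifying that the relevant commutant coincides with $\Gamma(M^\bot)$ as defined in the excerpt (noting $I_M$ and $I_{M^\bot}$ differ only by an overall sign, hence have the same commutant) is the delicate bookkeeping step. Everything else is a direct transcription of the quoted theorems.
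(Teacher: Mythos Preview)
The paper's own proof is a one-line citation to \cite{Yoshikawa}; your proposal correctly unpacks that reference, combining Proposition~\ref{truc}, the Torelli theorem for 2-elementary K3 surfaces of type $M$, and the equality $\Gamma_{M^\bot}=\mathcal{O}(M^\bot)$, which is exactly the intended route.

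One local slip deserves correction: your reason for the period avoiding $\mathcal{D}_{M^\bot}$ does not work. A $(-2)$-class $d\in M^\bot$ orthogonal to $\eta$ is indeed algebraic, but $M^\bot$ is the $(-1)$-eigenspace of $I_M$, so $\alpha^{-1}(d)$ is \emph{anti}-invariant under $\tau^*$ and therefore does not lie in $\Pic(S)^\tau$; no contradiction with $r(P)=r$ arises. The standard argument is instead: by Riemann--Roch on a K3 surface either $\alpha^{-1}(d)$ or $-\alpha^{-1}(d)$ is effective, and since $\tau^*\alpha^{-1}(d)=-\alpha^{-1}(d)$ the involution would carry an effective class to its negative, which is impossible for a nonzero class. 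This repair is routine and the rest of your outline stands. (A minor efficiency remark: well-definedness of $\overline{\varpi}_{r,a,\delta}$ only requires the trivial inclusion $\Gamma_{M^\bot}\subseteq\mathcal{O}(M^\bot)$; the equality from Yoshikawa's Proposition~11.2 is used only afterwards, to identify $\mathcal{M}_{M^\bot}^{\circ}=\Omega_{M^\bot}^{\circ}/\Gamma_{M^\bot}$ with the target $\Omega_{M^\bot}^{\circ}/\mathcal{O}(M^\bot)$.)
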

\begin{proof}
See \cite{Yoshikawa} page 8.
\end{proof}
We define $$\mathfrak{O}_{r,a,\delta}=\overline{\varpi}_{r,a,\delta}^{-1}\left(\left\{\left.\mathcal{O}(M^{\bot})\cdot\eta\in\Omega_{M^{\bot}}^{°}/\mathcal{O}(M^{\bot})\right|\left\langle \eta,x\right\rangle\neq 0,\ \forall x\in M^{\bot}\setminus\left\{0\right\}\right\}\right).$$
It is the dense subset of $\mathfrak{M}_{r,a,\delta}$, the one which explains the meaning of "generic".
It has the following important property.
\begin{prop}
Let $M\subset L$ be a 2-elementary Lorentzian sublattice with $(r(M),a(M),\delta(M))=(r,a,\delta)$. If $(S,\tau)\in \mathfrak{O}_{r,a,\delta}$, then 
a $M$-marking of $(S,\tau)$ induces an isometry between $\Pic S$ and $M$. In particular, all elements of $\Pic S$ are invariant by $\tau^{*}$.
\end{prop}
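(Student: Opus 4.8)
The plan is to combine the Hodge-theoretic description of $\Pic S$ by means of its transcendental lattice with the genericity that is built into the definition of $\mathfrak{O}_{r,a,\delta}$. Fix a $M$-marking $\alpha$ of $(S,\tau)$ and a generator $\eta$ of $H^{0}(S,\omega_{S})$. Since $\tau$ is anti-symplectic, $\tau^{*}\eta=-\eta$, hence $I_{M}(\alpha(\eta))=\alpha(\tau^{*}\eta)=-\alpha(\eta)$; as $I_{M}$ acts by $-1$ exactly on $M^{\bot}\otimes\mathbb{C}$ and by $+1$ on $M\otimes\mathbb{C}$, this forces $\alpha(\eta)\in M^{\bot}\otimes\mathbb{C}$, so that $\pi(S,\alpha)=[\alpha(\eta)]$ lies in $\Omega_{M^{\bot}}^{\circ}$, in accordance with the definition of $\overline{\varpi}_{r,a,\delta}$.

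Next I would unwind the definition of $\mathfrak{O}_{r,a,\delta}$. By Corollary~\ref{Torelli} the class $\overline{\varpi}_{r,a,\delta}(S,\tau)=\mathcal{O}(M^{\bot})\cdot[\alpha(\eta)]$ does not depend on the chosen $M$-marking, and the subset of $\Omega_{M^{\bot}}^{\circ}/\mathcal{O}(M^{\bot})$ cutting out $\mathfrak{O}_{r,a,\delta}$ is saturated for the $\mathcal{O}(M^{\bot})$-action (replace $x$ by $g^{-1}x$ in the condition $\langle\eta,x\rangle\neq 0$). Hence $(S,\tau)\in\mathfrak{O}_{r,a,\delta}$ is equivalent to the statement that $\langle\alpha(\eta),x\rangle\neq 0$ for every $x\in M^{\bot}\setminus\{0\}$, i.e. to the vanishing of the sublattice $N:=\{\,x\in M^{\bot}\mid\langle x,\alpha(\eta)\rangle=0\,\}$.

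Finally I would conclude via the transcendental lattice $T(S)$, i.e. the minimal primitive sublattice of $H^{2}(S,\mathbb{Z})$ whose complexification contains $\eta$; recall that for the projective K3 surface $S$ one has $T(S)=\Pic(S)^{\bot}$ and, $\Pic S$ being primitive, $\Pic S=T(S)^{\bot}$ (orthogonal complements in $H^{2}(S,\mathbb{Z})$). From $\Pic(S)^{\tau}=\alpha^{-1}(M)\subseteq\Pic S$ we get $\alpha(T(S))\subseteq M^{\bot}$, and $\alpha(T(S))$ is then the minimal primitive sublattice of $M^{\bot}$ whose complexification contains $\alpha(\eta)$; by the standard description of such a minimal sublattice it equals the orthogonal complement of $N$ inside $M^{\bot}$, which is all of $M^{\bot}$ by the previous paragraph. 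Thus $\alpha(T(S))=M^{\bot}$, and taking orthogonal complements in the unimodular lattice $L$, using $(M^{\bot})^{\bot}=M$ for the primitive lattice $M$, gives $\alpha(\Pic S)=M$. Since $\alpha$ is a $M$-marking we also have $\alpha(\Pic(S)^{\tau})=M$, whence $\Pic(S)^{\tau}=\Pic S$ and $\tau^{*}$ acts as the identity on $\Pic S$.

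The manipulations with the eigenspaces of $I_{M}$ and with orthogonal complements in $L$ are routine. The step that requires care, and is really the crux, is the identification $\alpha(T(S))=M^{\bot}$: one has to check that a primitive sublattice of $M^{\bot}$ is still primitive in $L$ (because $L/M^{\bot}$ is free), so that the minimal primitive sublattice of $L$ containing $\alpha(\eta)$ agrees with the minimal primitive sublattice of $M^{\bot}$ containing it, and then that the hypothesis $(S,\tau)\in\mathfrak{O}_{r,a,\delta}$ translates, after passing to the quotient by $\mathcal{O}(M^{\bot})$, into precisely the condition $N=0$.
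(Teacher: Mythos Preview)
Your argument is correct, but it is considerably more elaborate than what the paper does. The paper argues in three lines, directly at the level of $\Pic S$: given $x\in\Pic S$, write $\alpha(x)=\frac{a+b}{2}$ with $a\in M$ and $b\in M^{\bot}$ (possible since $M\oplus M^{\bot}$ has index a power of $2$ in $L$); from $\langle\alpha(x),\alpha(\eta)\rangle=0$ and $\langle a,\alpha(\eta)\rangle=0$ one gets $\langle b,\alpha(\eta)\rangle=0$, hence $b=0$ by the defining condition of $\mathfrak{O}_{r,a,\delta}$, and primitivity of $M$ gives $\alpha(x)\in M$.

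You reach the same conclusion via the transcendental lattice: you show $\alpha(T(S))\subseteq M^{\bot}$, identify $\alpha(T(S))$ with $N^{\bot_{M^{\bot}}}$ using the double-orthogonal characterization of $T(S)$, and then pass back to $\Pic S$ by taking complements in $L$. This is sound (the point you flag at the end, that primitivity in $M^{\bot}$ agrees with primitivity in $L$ and that the condition defining $\mathfrak{O}_{r,a,\delta}$ is $\mathcal{O}(M^{\bot})$-saturated, is exactly what is needed), but it routes through several auxiliary identifications that the paper bypasses entirely. In effect you prove $\alpha(\Pic S)\cap M^{\bot}=0$ and then use primitivity to conclude, whereas the paper checks elementwise that the $M^{\bot}$-component of each $\alpha(x)$ vanishes. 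The paper's approach is shorter and avoids invoking properties of $T(S)$ or the projectivity of $S$; yours has the mild advantage of making the role of the transcendental lattice explicit, which connects more visibly with the later use of $T_S$ in Theorem~\ref{derived}.
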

\begin{proof}
Indeed, let $x\in \Pic S$ then $\left\langle x,\eta\right\rangle=0$, where $\eta\in H^{0}(S,\omega_{S})\setminus\left\{0\right\}$.
Let $\alpha$ be a $M$-marking of $S$, then
we can write $\alpha(x)=\frac{a+b}{2}$, where $a\in M$ and $b\in M^{\bot}$. Since $\left\langle a, \alpha(\eta)\right\rangle=0$, we have $\left\langle b, \alpha(\eta)\right\rangle=0$, so by hypothesis $b=0$.
\end{proof}
\begin{cor}\label{involution}
Let $(S,\tau)$ and $(S',\tau')$ be in $\mathfrak{O}_{r,a,\delta}$, if $S$ and $S'$ are isomorphic then $(S,\tau)$ and $(S',\tau')$ are isomorphic.
\end{cor}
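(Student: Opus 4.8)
The plan is to combine the previous proposition (which says that for $(S,\tau)\in\mathfrak{O}_{r,a,\delta}$ a $M$-marking identifies $\Pic S$ with $M$ and makes $\tau^{*}$ act trivially on $\Pic S$) with the classical strong Torelli theorem for K3 surfaces. Suppose $g:S\xrightarrow{\sim} S'$ is an isomorphism of the underlying K3 surfaces; I want to upgrade it to an isomorphism of pairs, i.e. to modify $g$ so that it intertwines $\tau$ and $\tau'$. The key observation is that an antisymplectic involution on a K3 surface with $\Pic = M$ Lorentzian and no extra $(-2)$-classes orthogonal to $\Pic$ is \emph{uniquely determined} by the surface: it must act as $+1$ on $\Pic S$ and as $-1$ on $\operatorname{T}_S = (\Pic S)^{\bot}$, and by Corollary 1.5.2 of \cite{Lattice} there is exactly one such isometry extending to $H^2(S,\mathbb{Z})$ — it is $I_M$ in any $M$-marking. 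Hence $\tau^{*}$ and, pulled across $g$, the involution $(g^{-1})^{*}\circ\tau'^{*}\circ g^{*}$ are both antisymplectic involutions of $S$ inducing the identity on $\Pic S$ and $-\mathrm{id}$ on $\operatorname{T}_S$, so they coincide on $H^2(S,\mathbb{Z})$.

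Concretely, I would argue as follows. First, $g^{*}:H^2(S',\mathbb{Z})\to H^2(S,\mathbb{Z})$ is a Hodge isometry sending $\Pic S'$ to $\Pic S$, ample classes to ample classes, and the period to the period. Consider $h:=g^{*}\circ\tau'^{*}\circ(g^{*})^{-1}$, an isometry of $H^2(S,\mathbb{Z})$. Since $\tau'^{*}$ is $+1$ on $\Pic S'=M$ and $-1$ on $\operatorname{T}_{S'}=M^{\bot}$ (by the previous proposition, using $(S',\tau')\in\mathfrak{O}_{r,a,\delta}$), and $g^{*}$ carries $\Pic S'$ isometrically onto $\Pic S$ and $\operatorname{T}_{S'}$ onto $\operatorname{T}_S$, the conjugate $h$ is $+1$ on $\Pic S$ and $-1$ on $\operatorname{T}_S$. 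But $\tau^{*}$ has exactly the same description, again by the previous proposition applied to $(S,\tau)$; and a decomposition $H^2(S,\mathbb{Z})=\Pic S\oplus \operatorname{T}_S$ need not be direct, so to conclude $h=\tau^{*}$ I invoke the uniqueness clause of Corollary 1.5.2 of \cite{Lattice}: both $h$ and $\tau^{*}$ restrict to $\mathrm{id}$ on $\Pic S$ and $-\mathrm{id}$ on $\operatorname{T}_S$, hence are the unique common extension. Therefore $g^{*}\circ\tau'^{*}=\tau^{*}\circ g^{*}$ as isometries of cohomology.

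Finally I would feed this into the strong Torelli theorem (\cite{PSS}, \cite{BR}): the Hodge isometry $g^{*}$ maps the K\"ahler cone of $S'$ to that of $S$ (it already does, being induced by an isomorphism), so there is a \emph{unique} isomorphism $f:S\to S'$ with $f^{*}=g^{*}$ — and uniqueness forces $f=g$ in fact, but what matters is that $f^{*}=g^{*}$ intertwines $\tau^{*}$ and $\tau'^{*}$. Because the representation of $\mathrm{Aut}(S)$ on $H^2(S,\mathbb{Z})$ is faithful for a K3 surface, the relation $f^{*}\circ\tau'^{*}=\tau^{*}\circ f^{*}$ lifts to $f\circ\tau = \tau'\circ f$, i.e. $f$ is an isomorphism of the pairs $(S,\tau)$ and $(S',\tau')$. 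The main obstacle is the step identifying $h$ with $\tau^{*}$: one must be careful that knowing an isometry is $\pm 1$ on $\Pic S$ and $\operatorname{T}_S$ separately pins it down on all of $H^2$, which is exactly where the $2$-elementary hypothesis and Corollary 1.5.2 of \cite{Lattice} are essential — over a non-split extension the "obvious" involution is not obviously well defined, and it is precisely the genericity condition $(S,\tau)\in\mathfrak{O}_{r,a,\delta}$ (no $(-2)$-classes in $M^{\bot}$ pairing trivially with $\eta$) that guarantees $\operatorname{T}_S=M^{\bot}$ so that this dichotomy applies.
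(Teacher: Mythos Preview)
Your argument is correct, but it takes a different route from the paper's. The paper uses the period-map description of $\mathfrak{M}_{r,a,\delta}$ (Corollary~\ref{Torelli}): choosing $M$-markings $\alpha_{\tau},\alpha_{\tau'}$, the Hodge isometry $\varphi$ coming from $S\simeq S'$ gives an element $(\alpha_{\tau}\circ\varphi^{-1}\circ\alpha_{\tau'}^{-1})_{|M^{\bot}}\in\mathcal{O}(M^{\bot})$ carrying one period to the other, so $\overline{\varpi}_{r,a,\delta}(S,\tau)=\overline{\varpi}_{r,a,\delta}(S',\tau')$. Your approach instead shows directly that the given isomorphism $g$ already intertwines the involutions: you observe that $h=g^{*}\tau'^{*}(g^{*})^{-1}$ and $\tau^{*}$ both act as $(+1,-1)$ on $\Pic S\oplus T_{S}$, hence coincide on all of $H^{2}(S,\mathbb{Z})$, and then invoke faithfulness of $\operatorname{Aut}(S)$ on cohomology. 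This is more elementary --- it bypasses the moduli machinery --- and yields the slightly stronger statement that \emph{any} isomorphism $S\simeq S'$ is automatically equivariant. Two minor remarks: your final appeal to strong Torelli to produce an $f$ with $f^{*}=g^{*}$ is superfluous, since $g$ itself already does the job; and the uniqueness of the extension of $(+1,-1)$ from $\Pic S\oplus T_{S}$ to $H^{2}(S,\mathbb{Z})$ needs no $2$-elementary hypothesis (it follows just from finite index), so Corollary~1.5.2 of \cite{Lattice} is not really where the weight lies --- the genericity $(S,\tau)\in\mathfrak{O}_{r,a,\delta}$ enters only to ensure $\Pic S=M$ and $T_{S}=M^{\bot}$.
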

\begin{proof}
Let $M\subset L$ be a sublattice with $(r(M),a(M),\delta(M))=(r,a,\delta)$, let $\alpha_{\tau}$ and $\alpha_{\tau'}$ be $M$-markings of $(S,\tau)$ and $(S',\tau')$ respectively. Let $\eta\in H^{0}(S,\omega_{S})\setminus\left\{0\right\}$ and $\eta'\in H^{0}(S',\omega_{S'})\setminus\left\{0\right\}$.
We have the following digram:
$$\xymatrix{
H^{2}(S,\mathbb{Z}) \ar[dd]_{\varphi} \ar[rd]^{\alpha_{\tau}}& \\
& L \\
H^{2}(S',\mathbb{Z})\ar[ru]^{\alpha_{\tau'}}
 }$$
where $\varphi$ is a Hodge isometry.
Then we have $$\alpha_{\tau}(\eta)=(\alpha_{\tau}\circ\varphi^{-1}\circ\alpha_{\tau'}^{-1})_{|M^{\bot}}(\alpha_{\tau'}(\eta')).$$
Since $(\alpha_{\tau}\circ\varphi^{-1}\circ\alpha_{\tau'}^{-1})_{|M^{\bot}}\in\mathcal{O}(M^{\bot})$, we have $$\overline{\varpi}_{M}(S,\tau)=\overline{\varpi}_{M}(S',\tau').$$
\end{proof}

\subsection{Applications}\label{isomorphic}
Now, we will work with the moduli space $\mathfrak{M}_{8,8,1}$. 
\\

\textbf{Remark}: We have $M=I_{1,7}(2)$ for the associated Lorentzian sublattice of $L$, where $I_{p,q}$ stands for the lattice $\mathbb{Z}^{p+q}$ with quadratic form given by the diagonal matrix $$\diag(\underbrace{1,...,1}_{p},\underbrace{-1,...,-1}_{q})$$ and $\Lambda(d)$ denotes $\Lambda$ with quadratic form multiplied by $d$ for any lattice $\Lambda$ and any integer $d$.

We recall that $U$ is the locus of pairs $(\overline{B_{0}},\overline{\Delta_{0}})$ in $|\mathcal{O}_{\mathbb{P}^{2}}(4)|\times|\mathcal{O}_{\mathbb{P}^{2}}(4)|$ such that $\overline{B_{0}}$ and $\overline{\Delta_{0}}$ are smooth quartics, tangent to each other at eight points lying on a conic. We will denote by $\mu_{\overline{B_{0}}}:X_{\overline{B_{0}}}\rightarrow\mathbb{P}^{2}$ the double cover of $\mathbb{P}^{2}$ branched over $\overline{B_{0}}$.
We define $\mathcal{Q}\subset|\mathcal{O}_{\mathbb{P}^{2}}(4)|\times|\mathcal{O}_{\mathbb{P}^{2}}(4)|$ to be the set of pair $(\overline{B_{0}},2Q)$, where $\overline{B_{0}}$ is a smooth quartic and $Q$ a conic such that $\mu_{\overline{B_{0}}}^{*}(Q)$ is smooth. 

\begin{prop}
There is an isomorphism between $U/PGL_{3}\cup\mathcal{Q}/PGL_{3}$ and $\mathfrak{M}_{8,8,1}$. 
\end{prop}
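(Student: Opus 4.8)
The plan is to construct an explicit bijective map between the double cover data defining a pair in $U \cup \mathcal{Q}$ (modulo $PGL_3$) and the period-theoretic data classifying a point of $\mathfrak{M}_{8,8,1}$, using Corollary \ref{Torelli} to replace $\mathfrak{M}_{8,8,1}$ by the period domain quotient $\Omega_{M^{\bot}}^{\circ}/\mathcal{O}(M^{\bot})$ with $M = I_{1,7}(2)$. First I would check that the K3 surface $S$ attached to a pair $(\overline{B_0},\overline{\Delta_0}) \in U$ (or $\mathcal{Q}$) genuinely carries an antisymplectic involution $\tau$ of the stated type: $\tau$ is the Galois involution of $\rho : S \to X$, and one must identify $\Pic(S)^{\tau}$. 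Since $\rho$ is a double cover of a del Pezzo surface of degree 2 branched along a smooth curve in $|-2K_X|$, the invariant part of $H^2$ is essentially $\rho^*(\Pic X)$, and $\Pic X \simeq I_{1,7}$ (the degree-2 del Pezzo lattice) with $\rho^*$ multiplying the form by $2$; hence $P = \Pic(S)^{\tau} \simeq I_{1,7}(2)$, which indeed has invariants $(r,a,\delta) = (8,8,1)$. This shows the assignment $(\overline{B_0},\overline{\Delta_0}) \mapsto (S,\tau)$ lands in $\mathfrak{M}_{8,8,1}$, and it is $PGL_3$-invariant, so it descends to a map on the quotient.

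Next I would prove surjectivity. Given a $2$-elementary K3 surface $(S,\tau)$ of type $(8,8,1)$, the quotient $Y = S/\tau$ is a smooth rational surface (the fixed locus being a disjoint union of curves, with the relevant component in $|-2K_Y|$), and one identifies $Y$ with a del Pezzo surface of degree 2 — equivalently $Y$ carries a degree-2 morphism to $\mathbb{P}^2$ branched along a smooth quartic $\overline{B_0}$, determined by $|-K_Y|$. The branch curve of $\rho : S \to Y$ is a smooth member $\Delta_0 \in |-2K_Y|$, and by Lemma 5.14 of \cite{Shouhei} (used in Section \ref{Rappels}) its image downstairs is either a quartic $\overline{\Delta_0}$ totally tangent to $\overline{B_0}$ at eight points on a conic, or a double conic $2Q$; these are exactly the pairs in $U$ and $\mathcal{Q}$ respectively. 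This reconstructs a pair, well-defined up to the choice of isomorphism $\mathbb{P}^2 \cong |-K_Y|^{\vee}$, i.e. up to $PGL_3$, giving an inverse on the level of sets.

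For injectivity I would argue that if two pairs produce isomorphic pairs $(S,\tau) \cong (S',\tau')$, then the isomorphism descends to $X \cong X'$ intertwining $\rho,\rho'$ and hence carries $\Delta_0$ to $\Delta_0'$; composing with the canonical map to $\mathbb{P}^2 = |-K_X|^{\vee}$ gives a $PGL_3$-element taking $(\overline{B_0},\overline{\Delta_0})$ to $(\overline{B_0'},\overline{\Delta_0'})$. Finally, to upgrade set-theoretic bijectivity to an isomorphism of (coarse moduli, or quasi-projective) spaces, I would match the two constructions with the period map: the assignment of periods is holomorphic in families, Corollary \ref{Torelli} identifies $\mathfrak{M}_{8,8,1}$ with the Hausdorff analytic quotient $\Omega_{M^{\bot}}^{\circ}/\mathcal{O}(M^{\bot})$, and the family of K3 covers over $U \cup \mathcal{Q}$ induces a holomorphic classifying map; bijectivity plus normality of the target then forces it to be an isomorphism. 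The main obstacle I expect is the surjectivity step: one must show that \emph{every} $2$-elementary K3 of type $(8,8,1)$ has quotient a degree-2 del Pezzo (not some other rational surface with the same invariant lattice) and that the branch curve descends to a pair lying in $U$ or $\mathcal{Q}$ and nothing else — this requires a careful analysis of the geometry of $S/\tau$, the position of the fixed curve relative to $-2K_Y$, and the dichotomy quartic-versus-double-conic, rather than pure lattice theory.
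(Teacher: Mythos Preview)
Your approach is essentially the same as the paper's: construct the forward map $(\overline{B_0},\overline{\Delta_0}) \mapsto (S,\tau)$, construct the inverse by taking $X=S/\tau$ and pushing the branch curve down via $|-K_X|$, and check both are well defined modulo $PGL_3$. Two remarks on where you diverge. First, the obstacle you single out at the end---showing that for \emph{every} $(S,\tau)$ of type $(8,8,1)$ the quotient is a degree-$2$ del Pezzo with branch curve in $|-2K_X|$---is not handled in the paper by any direct geometric analysis; it is simply a citation to Nikulin's classification of fixed loci of non-symplectic involutions (\cite{Nikulin}), which reads off from the invariants $(8,8,1)$ that the fixed curve has genus $3$ and the quotient is the right del Pezzo, after which Lemma~5.14 of \cite{Shouhei} gives the quartic/double-conic dichotomy exactly as you say. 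Second, your final step---using Corollary~\ref{Torelli} and holomorphicity of periods to promote the bijection to an isomorphism of analytic spaces---goes beyond what the paper does: the paper just exhibits mutually inverse maps $\mathscr{P}$ and $\mathscr{G}$ and declares the composition the identity, without addressing the analytic structure. Your lattice computation $\Pic(S)^\tau \simeq \rho^*\Pic X \simeq I_{1,7}(2)$ is a cleaner substitute for the paper's bare citation of \cite{Nikulin} p.~663 for the forward direction.
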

\begin{proof}~~\\
\begin{itemize}
\item Step 1: \emph{The map $\mathscr{P}:U/PGL_{3}\cup\mathcal{Q}/PGL_{3}\rightarrow\mathfrak{M}_{8,8,1}$}

We will build $U/PGL_{3}\rightarrow\mathfrak{M}_{8,8,1}$; the construction of $\mathcal{Q}/PGL_{3}\rightarrow\mathfrak{M}_{8,8,1}$ is the same.

First, we have the map $U\rightarrow\mathfrak{M}_{8,8,1}$. 
Remember that diagram (1) of Section 1 gave a K3 surface $S$ with an involution $\tau$. By page 663 of \cite{Nikulin} $(S,\tau)\in\mathfrak{M}_{8,8,1}$. So the diagram (1) gives us the map $U\rightarrow\mathfrak{M}_{8,8,1}$. 
Now, let $(\overline{B_{0}},\overline{\Delta_{0}})$ and $(\overline{B_{0}}',\overline{\Delta_{0}}')$ be in $|\mathcal{O}_{\mathbb{P}^{2}}(4)|\times|\mathcal{O}_{\mathbb{P}^{2}}(4)|$ such that $f(\overline{B_{0}},\overline{\Delta_{0}})=(\overline{B_{0}}',\overline{\Delta_{0}}')$, where $f\in PGL_{3}$.
We can draw the following commutative diagram
$$\xymatrix@R=10pt{
B_{0}\ar@{^{(}->}[d]\ar@{=}[r]&\overline{B_{0}}\ar@{^{(}->}[d]\\
X\ar@(ul,dl)[]_{i}\ar[r]^{\mu}\eq[d]^{j}&\mathbb{P}^{2}\eq[d]^{f}\\
X'\ar@(ul,dl)[]_{i'}\ar[r]_{\mu'}&\mathbb{P}^{2}\\
B_{0}'\ar@{^{(}->}[u]\ar@{=}[r]&\overline{B_{0}}',\ar@{^{(}->}[u]
}$$
where $j$ is induced by $f$ (the other symbols are the same as in the diagram (1) of Section 1).
The map $j$ sends $\mu^{-1}(\overline{\Delta_{0}})=\Delta_{0}+i(\Delta_{0})$ on $\mu'^{-1}(\overline{\Delta_{0}}')=\Delta_{0}'+i'(\Delta_{0}')$. 
The curves $\overline{\Delta_{0}}$ and $\overline{\Delta_{0}}'$ are smooth, so they are irreducible. Then all the curves $\Delta_{0}$, $i(\Delta_{0})$, $\Delta_{0}'$ and $i'(\Delta_{0}')$ are irreducible.
Therefore $j$ sends $\Delta_{0}$ on $\Delta_{0}'$ or on $i'(\Delta_{0}')$. If $j$ sends $\Delta_{0}$ on $i'(\Delta_{0}')$, we replace $j$ by $i'\circ j$. Now, let $\rho:S\rightarrow X$ and $\rho':S'\rightarrow X'$ be the double covers branched in $\Delta_{0}$ and $\Delta_{0}'$ respectively. We get the following commutative diagram
$$\xymatrix@R=10pt{
 \Delta\ar@{^{(}->}[d]\ar@{=}[r]&\Delta_{0}\ar@{^{(}->}[d]\\
 S\ar@(ul,dl)[]_{\tau}\ar[r]^{\rho}\eq[d]^{\varphi}&X\eq[d]^{j}\\
 S'\ar@(ul,dl)[]_{\tau'}\ar[r]_{\rho'}&X'\\
 \Delta'\ar@{^{(}->}[u]\ar@{=}[r]&\Delta_{0}',\ar@{^{(}->}[u]
}$$
where $\varphi$ is induced by $j$. This implies $(S,\tau)\simeq(S',\tau')$, and we get the map: $$U/PGL_{3}\rightarrow \mathfrak{M}_{8,8,1}.$$
\item Step 2: \emph{The inverse function $\mathscr{G}:\mathfrak{M}_{8,8,1}\rightarrow U/PGL_{3}\cup\mathcal{Q}/PGL_{3}$}

Let $(S,\tau)$ be in $\mathfrak{M}_{8,8,1}$. By \cite{Nikulin}, $\rho:S\rightarrow X=S/\tau$ is a double cover ramified in a smooth curve of genus 3, $\Delta$ and $X$ is a del Pezzo surface. Moreover the linear system $|-K_{X}|$ induces a double cover $\mu:X\rightarrow \mathbb{P}^{2}$ branched in a smooth quartic of $\mathbb{P}^{2}$, $\overline{B}_{0}$. We have $\rho(\Delta)\in|-2K_{X}|$, then by Lemma 5.14 of \cite{Shouhei}, we have $(\overline{B}_{0},\mu(\rho(\Delta)))\in U$ or $(\overline{B}_{0},\mu(\rho(\Delta)))\in\mathcal{Q}$.
Now let $(S,\tau)$ and $(S',\tau')$ be two isomorphic objects from $\mathfrak{M}_{8,8,1}$. We denote by $(\overline{B}_{0},\overline{\Delta}_{0})$ and $(\overline{B}_{0}',\overline{\Delta}_{0}')$ the two pairs corresponding to $(S,\tau)$ and $(S',\tau')$ respectively (here $\overline{\Delta}_{0}$ and $\overline{\Delta}_{0}'$ may be a double conic).
To have a well defined map from $\mathfrak{M}_{8,8,1}$ to $U/PGL_{3}\cup\mathcal{Q}/PGL_{3}$, we must verify that $(\overline{B}_{0},\overline{\Delta}_{0})$ and $(\overline{B}_{0}',\overline{\Delta}_{0}')$ are exchanged by an automorphism of $\mathbb{P}^{2}$.
We have an isomorphism 
$f:S\simeq S'$ with $f\circ \tau=\tau'\circ f$. It induces a commutative diagram
$$\xymatrix{
S \eq[d]_{f}\ar[r]^{\rho}& X\eq[d]_{g}\ar[r]&|-K_{X}|\simeq\mathbb{P}^{2}\eq[d]_{(g^{-1})^{*}}\\
S'\ar[r]_{\rho'}& X'\ar[r]&|-K_{X'}|\simeq\mathbb{P}^{2}, }$$
which implies the result.
\end{itemize}
To finish, we see easily that the composition of $\mathscr{G}$ and $\mathscr{P}$ is the identity.
\end{proof}
\begin{cor}
The involution on $U/PGL_{3}$ given by $(\overline{B_{0}},\overline{\Delta_{0}})\rightarrow(\overline{\Delta_{0}},\overline{B_{0}})$ induces a rational involution of $\mathfrak{M}_{8,8,1}$ with indeterminacy on $\mathscr{P}(\mathcal{Q}/PGL_{3})$, which exchanges the two 2-elementary K3 surfaces $\mathscr{P}(\overline{B_{0}},\overline{\Delta_{0}})=(S,\tau)$ and $\mathscr{P}(\overline{\Delta_{0}},\overline{B_{0}})=(\widetilde{S},\widetilde{\tau})$. Moreover $(S,\tau)$ and $(\widetilde{S},\widetilde{\tau})$ are isomorphic if and only if there exists an automorphism $f$ of $\mathbb{P}^{2}$ such that $f(\overline{B_{0}},\overline{\Delta_{0}})=(\overline{\Delta_{0}},\overline{B_{0}})$.
\end{cor}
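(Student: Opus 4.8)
The plan is to deduce both assertions directly from the Proposition just proved, which establishes the isomorphism $\mathscr{P}\colon U/PGL_{3}\cup\mathcal{Q}/PGL_{3}\xrightarrow{\sim}\mathfrak{M}_{8,8,1}$ together with its inverse $\mathscr{G}$. First I would observe that the naive swap $s\colon(\overline{B_{0}},\overline{\Delta_{0}})\mapsto(\overline{\Delta_{0}},\overline{B_{0}})$ is a well-defined involution on the locus of ordered pairs of smooth quartics, and it preserves the tangency-on-a-conic condition, hence restricts to an involution of $U$ (and commutes with the $PGL_{3}$-action, so descends to $U/PGL_{3}$). However $s$ does \emph{not} preserve $\mathcal{Q}$: a pair $(\overline{B_{0}},2Q)$ is sent to $(2Q,\overline{B_{0}})$, which is not of the required form. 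Transporting $s$ through $\mathscr{P}$ therefore yields a map $\mathfrak{M}_{8,8,1}\dashrightarrow\mathfrak{M}_{8,8,1}$ which is a genuine involution away from the image $\mathscr{P}(\mathcal{Q}/PGL_{3})$ of the degenerate locus, and whose indeterminacy is exactly there; by construction it sends $(S,\tau)=\mathscr{P}(\overline{B_{0}},\overline{\Delta_{0}})$ to $\mathscr{P}(\overline{\Delta_{0}},\overline{B_{0}})$, which by the constructions of Section~\ref{Pol} is precisely $(\widetilde{S},\widetilde{\tau})$.

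For the second assertion I would argue both directions. If there is $f\in PGL_{3}$ with $f(\overline{B_{0}},\overline{\Delta_{0}})=(\overline{\Delta_{0}},\overline{B_{0}})$, then $(\overline{B_{0}},\overline{\Delta_{0}})$ and $(\overline{\Delta_{0}},\overline{B_{0}})$ define the same class in $U/PGL_{3}$, so applying $\mathscr{P}$ gives $(S,\tau)\simeq(\widetilde{S},\widetilde{\tau})$, and in particular $S\simeq\widetilde{S}$. Conversely, suppose $(S,\tau)\simeq(\widetilde{S},\widetilde{\tau})$ in $\mathfrak{M}_{8,8,1}$. Applying the inverse map $\mathscr{G}$ — which is well defined on all of $\mathfrak{M}_{8,8,1}$ by the Proposition — the two isomorphic objects have the same image in $U/PGL_{3}\cup\mathcal{Q}/PGL_{3}$; but $\mathscr{G}(S,\tau)$ is the class of $(\overline{B_{0}},\overline{\Delta_{0}})$ and $\mathscr{G}(\widetilde{S},\widetilde{\tau})$ is the class of $(\overline{\Delta_{0}},\overline{B_{0}})$ (by the very definition of $\widetilde{S}$ via the permuted diagram), so these two ordered pairs lie in the same $PGL_{3}$-orbit, i.e.\ there is an $f\in PGL_{3}$ carrying one to the other. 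Here I am using that both pairs live in $U$ — not in $\mathcal{Q}$ — which is automatic since we started from a genuine pair of smooth quartics; on $\mathcal{Q}$ the statement would be vacuous as $\widetilde{S}$ is not even defined there.

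The only genuinely delicate point is the bookkeeping in identifying the transported map with the swap \emph{as a rational map}, i.e.\ checking that its indeterminacy locus is exactly $\mathscr{P}(\mathcal{Q}/PGL_{3})$ and nothing more. This reduces to the observation that for $(\overline{B_{0}},\overline{\Delta_{0}})\in U$ the swapped pair again lies in $U$ (so the composite $\mathscr{P}\circ s\circ\mathscr{G}$ is regular and involutive on the complement of $\mathscr{P}(\mathcal{Q}/PGL_{3})$), together with the hypothesis built into Section~\ref{Pol} that $(\overline{B_{0}},\overline{\Delta_{0}})$ and $(\overline{\Delta_{0}},\overline{B_{0}})$ both lie in $\mathfrak{L}$ for generic such pairs, so that the Markushevich--Tikhomirov construction — and hence $\mathscr{P}$ — applies to the swapped pair as well. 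I would also remark that the equivalence of the two conditions ``$S\simeq\widetilde{S}$'' and ``$(S,\tau)\simeq(\widetilde{S},\widetilde{\tau})$'' follows for generic members from Corollary~\ref{involution}, which is why one may phrase the criterion in terms of $S$ alone.
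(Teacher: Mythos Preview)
Your proposal is correct and is exactly the argument the paper has in mind: the Corollary is stated without proof in the paper, as an immediate consequence of the preceding Proposition (the isomorphism $\mathscr{P}$ with inverse $\mathscr{G}$), and your write-up simply spells out that deduction. One small remark: the digression about $\mathfrak{L}$ is unnecessary here, since $\mathscr{P}$ is defined on all of $U/PGL_{3}\cup\mathcal{Q}/PGL_{3}$ (the genericity conditions in $\mathfrak{L}$ are only needed later for the Prymian, not for constructing $(S,\tau)$), so regularity of $\mathscr{P}\circ s\circ\mathscr{G}$ on $\mathscr{P}(U/PGL_{3})$ is automatic.
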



We define an open subset of $U/PGL_{3}$ by $$\mathscr{O}=\left\{\left.PGL_{3}\cdot(\overline{B_{0}},\overline{\Delta_{0}})\in U/PGL_{3}\right|PGL_{3}\cdot(\overline{B_{0}},\overline{\Delta_{0}})\neq PGL_{3}\cdot(\overline{\Delta_{0}},\overline{B_{0}})\right\}.$$
Now we are able to answer to the question we asked in the beginning of the section:
\begin{cor}\label{iso}
Let $(S,\tau)\in \mathfrak{O}_{8,8,1}\cap\mathscr{P}(\mathscr{O})$. Then $S$ and $\widetilde{S}$ are not isomorphic.
\end{cor}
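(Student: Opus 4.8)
The plan is to combine the two preceding corollaries. By Corollary~\ref{involution}, if $(S,\tau)$ and $(S',\tau')$ both lie in $\mathfrak{O}_{8,8,1}$ and $S \simeq S'$ as K3 surfaces (forgetting the involutions), then in fact $(S,\tau) \simeq (S',\tau')$ as 2-elementary K3 surfaces. Applying this with $(S',\tau') = (\widetilde{S},\widetilde{\tau})$: note that by the Proposition identifying $U/PGL_3 \cup \mathcal{Q}/PGL_3$ with $\mathfrak{M}_{8,8,1}$ and by the way $\widetilde{S}$ was constructed in Section~\ref{Pol}, we have $\mathscr{P}(\overline{B_0},\overline{\Delta_0}) = (S,\tau)$ and $\mathscr{P}(\overline{\Delta_0},\overline{B_0}) = (\widetilde{S},\widetilde{\tau})$, so both $(S,\tau)$ and $(\widetilde{S},\widetilde{\tau})$ are elements of $\mathfrak{M}_{8,8,1}$. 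The hypothesis $(S,\tau) \in \mathfrak{O}_{8,8,1}$ is given; I would first check that $(\widetilde{S},\widetilde{\tau}) \in \mathfrak{O}_{8,8,1}$ as well. This should follow because $\mathfrak{O}_{8,8,1}$ is defined by a condition on the period point in $\Omega^{\circ}_{M^{\bot}}/\mathcal{O}(M^{\bot})$, and the duality isomorphism $\Prym(C_t,\tau_t)^{\vee} \simeq \Prym(\widetilde{C_t},\widetilde{\tau_t})$ together with genericity of $(\overline{B_0},\overline{\Delta_0})$ forces the transcendental lattices of $S$ and $\widetilde{S}$ to have the same rank; more directly, the set $\mathfrak{O}_{8,8,1}$ is dense and open in $\mathfrak{M}_{8,8,1}$ and the locus where $S \simeq \widetilde S$ is contained in the image $\mathscr{P}(\mathscr{O}^c)$ up to the generic locus — but to be safe I would simply impose, or verify, that the duality preserves membership in $\mathfrak{O}_{8,8,1}$ on a dense open set, which is where the hypothesis $(S,\tau)\in\mathscr{P}(\mathscr{O})$ enters.

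Granting that both $(S,\tau)$ and $(\widetilde{S},\widetilde{\tau})$ lie in $\mathfrak{O}_{8,8,1}$, I argue by contradiction. Suppose $S \simeq \widetilde{S}$. Then Corollary~\ref{involution} gives $(S,\tau) \simeq (\widetilde{S},\widetilde{\tau})$, i.e. there is an isomorphism of K3 surfaces intertwining $\tau$ and $\widetilde{\tau}$. By the final statement of the Corollary following the identification Proposition, this forces the existence of an automorphism $f \in PGL_3$ with $f(\overline{B_0},\overline{\Delta_0}) = (\overline{\Delta_0},\overline{B_0})$, hence $PGL_3\cdot(\overline{B_0},\overline{\Delta_0}) = PGL_3\cdot(\overline{\Delta_0},\overline{B_0})$. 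But the hypothesis $(S,\tau)\in\mathscr{P}(\mathscr{O})$ says precisely that $PGL_3\cdot(\overline{B_0},\overline{\Delta_0}) \neq PGL_3\cdot(\overline{\Delta_0},\overline{B_0})$, a contradiction. Therefore $S \not\simeq \widetilde{S}$.

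The main obstacle I anticipate is the bookkeeping around $\mathfrak{O}_{8,8,1}$: one must be careful that the hypothesis $(S,\tau)\in\mathfrak{O}_{8,8,1}$ alone is enough, or whether one also needs $(\widetilde S,\widetilde\tau)\in\mathfrak{O}_{8,8,1}$. In fact Corollary~\ref{involution} as stated requires \emph{both} $(S,\tau)$ and $(S',\tau')$ to lie in $\mathfrak{O}_{r,a,\delta}$, so the clean way to phrase the proof is: since $\mathfrak{O}_{8,8,1}$ is open and dense in $\mathfrak{M}_{8,8,1}$ and the duality $(S,\tau)\mapsto(\widetilde S,\widetilde\tau)$ corresponds under $\mathscr{P}$ to a rational involution of $\mathfrak{M}_{8,8,1}$, shrinking to a dense open subset we may assume $(\widetilde S,\widetilde\tau)\in\mathfrak{O}_{8,8,1}$ too; the condition $(S,\tau)\in\mathscr{P}(\mathscr O)$ is preserved and the argument above applies. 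Everything else is a direct citation of results already in the paper, so no serious computation is involved.
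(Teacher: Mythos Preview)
Your core argument (second paragraph) is exactly the paper's: from $(S,\tau)\in\mathscr{P}(\mathscr{O})$ one gets $(S,\tau)\not\simeq(\widetilde S,\widetilde\tau)$, and then Corollary~\ref{involution} upgrades this to $S\not\simeq\widetilde S$. The paper's proof is two lines and cites these two ingredients in the same order.

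Your bookkeeping worry about whether $(\widetilde S,\widetilde\tau)\in\mathfrak{O}_{8,8,1}$ is unnecessary, and the detour through Prym duality or shrinking to an open set is not needed. Membership in $\mathfrak{O}_{8,8,1}$ is equivalent to $\Pic(S)$ having rank exactly $8$ (this is what the period condition $\langle\eta,x\rangle\neq 0$ for all nonzero $x\in M^\bot$ says). In the contrapositive, once you assume $S\simeq\widetilde S$, their Picard lattices are isomorphic, so $(\widetilde S,\widetilde\tau)\in\mathfrak{O}_{8,8,1}$ follows automatically from $(S,\tau)\in\mathfrak{O}_{8,8,1}$; hence Corollary~\ref{involution} applies without further justification. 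The paper silently uses this, which is why it does not mention the issue.
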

\begin{proof}
Since $(S,\tau)\in\mathscr{P}(\mathscr{O})$, $(S,\tau)$ and $(\widetilde{S},\widetilde{\tau})$ are not isomorphic. Moreover $(S,\tau)\in \mathfrak{O}_{8,8,1}$ therefore by Corollary \ref{involution}, $S$ and $\widetilde{S}$ are not isomorphic either.
\end{proof}
\textbf{Remark}: 
\begin{itemize}
\item[1)]
The dimension of $\mathfrak{M}_{8,8,1}$ is 12.
\item[2)]
Let $\mathcal{B}:=\left\{PGL_{3}\cdot(\Gamma,\Gamma)\in U/PGL_{3}\right\}$; we have $\mathcal{B}\subset (U/PGL_{3})\setminus \mathscr{O}$. Moreover $\dim \mathcal{B}=6$, and $\mathscr{P}(\mathcal{B})$ parametrized the quadruple covers of $\mathbb{P}^{2}$ branched in smooth quartics. We also have $\mathcal{B}\subset (U/PGL_{3})\setminus(\mathfrak{L}/PGL_{3})$, where $\mathfrak{L}$ is the set of sufficiently generic pairs $(\overline{B_{0}},\overline{\Delta_{0}})$, see Definition \ref{gene}.
\item[3)]
The quotient variety $\mathcal{Q}/PGL_{3}$ has dimension 11. 
\end{itemize} 
~~\\

In fact, we can say even more: $S$ and $\widetilde{S}$ are not even derived equivalent.
We will denote by $D^{b}(S)$ the derived category of coherent sheaves on $S$.
Let $T_{S}$ be the transcendental lattice of $S$, that is the orthogonal complement to $\Pic S$ in $H^{2}(S,\mathbb{Z})$.
By Theorem 4.2.4. of \cite{Orlov}, the categories $D^{b}(S)$ and $D^{b}(S')$ are equivalent as triangulated categories if and only if
there exists a Hodge isometry between $T_{S}$ and $T_{S'}$.
We have the following theorem.
\begin{thm}\label{derived}
Let $S$ and $S'$ be two K3 surfaces such that $D^{b}(S)$ and $D^{b}(S')$ are equivalent. If $T_{S}$ is a 2-elementary sublattice of $H^{2}(S,\mathbb{Z})$, then $S$ and $S'$ are isomorphic.
\end{thm}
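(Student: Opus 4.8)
The plan is to promote the derived equivalence to a Hodge isometry of the full second cohomology lattices and then to invoke the global Torelli theorem for K3 surfaces. By Theorem 4.2.4 of \cite{Orlov} the hypothesis $D^{b}(S)\simeq D^{b}(S')$ provides a Hodge isometry $\phi\colon T_{S}\rightarrow T_{S'}$; in particular $T_{S'}$ is again 2-elementary, and — since a derived equivalence of algebraic K3 surfaces preserves projectivity — both $T_{S}$ and $T_{S'}$ are primitive 2-elementary sublattices of the K3 lattice $L=E_{8}(-1)^{2}\oplus U^{3}$ of one and the same signature $(2,x)$, with $T_{S}^{\bot}=\Pic S$ and $T_{S'}^{\bot}=\Pic S'$ Lorentzian. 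This is exactly the configuration handled by Lemma \ref{lemme}.

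Concretely, I would fix markings $\alpha\colon H^{2}(S,\mathbb{Z})\simeq L$ and $\alpha'\colon H^{2}(S',\mathbb{Z})\simeq L$. Then $\alpha(T_{S})$ and $\alpha'(T_{S'})$ are primitive 2-elementary sublattices of $L$ of signature $(2,x)$, and $\alpha'\circ\phi\circ\alpha^{-1}$ is an isometry between them, which by Lemma \ref{lemme} extends to some $g\in\mathcal{O}(L)$. Setting $\Phi:=\alpha'^{-1}\circ g\circ\alpha$ yields an isometry $\Phi\colon H^{2}(S,\mathbb{Z})\rightarrow H^{2}(S',\mathbb{Z})$ with $\Phi_{|T_{S}}=\phi$.

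It then remains to check that $\Phi$ is a Hodge isometry. The class $\eta$ generating $H^{2,0}(S)$ is orthogonal to $\Pic S$ and hence lies in $T_{S}\otimes\mathbb{C}$; since $\Phi_{|T_{S}}=\phi$ and $\phi$ is a Hodge isometry, one gets $\Phi_{\mathbb{C}}(H^{2,0}(S))=\phi_{\mathbb{C}}(\mathbb{C}\eta)=H^{2,0}(S')$, and as the Hodge structure on the $H^{2}$ of a K3 surface is determined by the line $H^{2,0}$, $\Phi$ respects the Hodge decompositions. The global Torelli theorem for K3 surfaces (see \cite{PSS} and \cite{BR}) then gives the conclusion: after composing $\Phi$ with $\pm 1$ and with a suitable element of the Weyl group generated by reflections in the $(-2)$-classes of $S'$, so that the Kähler cone of $S$ is carried into the Kähler cone of $S'$, one obtains an isomorphism $S\simeq S'$.

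The whole use of the 2-elementary hypothesis is concentrated in the extension step through Lemma \ref{lemme}; everything else is formal. Thus the main point requiring care — and I expect it to be the only real obstacle, though a mild one — is the verification that Lemma \ref{lemme} genuinely applies here: the primitivity of $T_{S}$ and $T_{S'}$ in $L$, the fact that $\phi$ transports the 2-elementary condition from $T_{S}$ to $T_{S'}$, and that projectivity of $S$ (hence of $S'$) forces the transcendental lattices to have signature $(2,\ast)$, so that the lemma can be invoked with $N_{1}=\alpha(T_{S})$, $N_{2}=\alpha'(T_{S'})$.
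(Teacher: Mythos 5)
Your argument coincides with the paper's own proof: Orlov's theorem produces the Hodge isometry $T_{S}\rightarrow T_{S'}$, Lemma \ref{lemme} is used to extend it (after choosing markings) to an isometry of $L$, hence a Hodge isometry $H^{2}(S,\mathbb{Z})\rightarrow H^{2}(S',\mathbb{Z})$, and the global Torelli theorem concludes. The extra details you supply (that $H^{2,0}$ lies in $T_{S}\otimes\mathbb{C}$, and the adjustment by $\pm1$ and the Weyl group before invoking Torelli) are correct and only make explicit what the paper leaves implicit.
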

\begin{proof}
Let $S$ and $S'$ be two K3 surfaces such that $D^{b}(S)$ and $D^{b}(S')$ are equivalent.
By Theorem 4.2.4. of \cite{Orlov} we have a Hodge isometry $\rho:T_{S}\rightarrow T_{S'}$.
Let $\alpha:H^{2}(S,\mathbb{Z})\cong L$ and $\beta:H^{2}(S',\mathbb{Z})\cong L$ be markings of $S$ and $S'$ respectively.
The lattices $\alpha(T_{S})$ and $\beta(T_{S'})$ are two 2-elementary sublattices of $L$ of signature $(2,x)$. So by Lemma \ref{lemme}, $\beta\circ\rho\circ\alpha^{-1}_{|\alpha(T_{S})}$ extends to an isometry of $L$, that we will denote by $\nu$.
Then $\beta^{-1}\circ\nu\circ\alpha:H^{2}(S,\mathbb{Z})\rightarrow H^{2}(S',\mathbb{Z})$ is a Hodge isometry, therefore by the Global Torelli Theorem for K3 surfaces (see for instance Chapter 10, Theorem 5.3. of \cite{HuybrechtsK3}), $S$ and $S'$ are isomorphic.
\end{proof}
\textbf{Remark}: For all 2-elementary K3 surfaces $(S,\tau)\in\mathfrak{O}_{r,a,\delta}$, $T_{S}$ is a 2-elementary sublattice of $H^{2}(S,\mathbb{Z})$.
\begin{cor}
Let $(S,\tau)\in \mathfrak{O}_{8,8,1}\cap\mathscr{P}(\mathscr{O})$, then $D^{b}(S)$ and $D^{b}(\widetilde{S})$ are not equivalent.
\end{cor}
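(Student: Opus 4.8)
The plan is to deduce this corollary directly from the two results just established. First I would invoke Corollary \ref{iso}, but more precisely I want to use the stronger intermediate facts: if $(S,\tau)\in\mathfrak{O}_{8,8,1}\cap\mathscr{P}(\mathscr{O})$, then by definition of $\mathscr{O}$ the two pairs $(\overline{B_0},\overline{\Delta_0})$ and $(\overline{\Delta_0},\overline{B_0})$ are not $PGL_3$-equivalent, hence by the Corollary following Proposition~2.3 the 2-elementary K3 surfaces $(S,\tau)$ and $(\widetilde{S},\widetilde\tau)$ are not isomorphic. The Remark preceding this corollary notes that for any $(S,\tau)\in\mathfrak{O}_{r,a,\delta}$ the transcendental lattice $T_S$ is a 2-elementary sublattice of $H^2(S,\mathbb{Z})$; in particular this holds here since $(S,\tau)\in\mathfrak{O}_{8,8,1}$.

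Next I would argue by contradiction: suppose $D^b(S)$ and $D^b(\widetilde{S})$ were equivalent. Since $T_S$ is 2-elementary, Theorem \ref{derived} applies verbatim with $S'=\widetilde{S}$, yielding that $S$ and $\widetilde{S}$ are isomorphic as K3 surfaces. Then Corollary \ref{involution} — whose hypotheses are met because both $(S,\tau)$ and $(\widetilde{S},\widetilde\tau)$ lie in $\mathfrak{M}_{8,8,1}$ and in fact in $\mathfrak{O}_{8,8,1}$ (the latter being stable under the bigonal-type involution since it is intrinsically defined) — forces $(S,\tau)\simeq(\widetilde{S},\widetilde\tau)$ as 2-elementary K3 surfaces, contradicting the previous paragraph. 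Hence no such equivalence exists.

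I expect the only genuinely delicate point to be checking that $(\widetilde{S},\widetilde\tau)$ also lies in $\mathfrak{O}_{8,8,1}$, so that Corollary \ref{involution} can be applied to the pair $(S,\tau)$, $(\widetilde{S},\widetilde\tau)$; one resolves this by noting that the hypotheses of Section~2 put $(\overline{\Delta_0},\overline{B_0})$ in $\mathfrak{L}$ as well, so $(\widetilde{S},\widetilde\tau)\in\mathfrak{M}_{8,8,1}$, and the condition of belonging to $\mathfrak{O}_{8,8,1}$ is an open condition on the period that is symmetric in the two quartics for generic $(S,\tau)$ — alternatively one can simply shrink $\mathscr{P}(\mathscr{O})$ to the locus where both $(S,\tau)$ and $(\widetilde S,\widetilde\tau)$ are in $\mathfrak{O}_{8,8,1}$, which is still dense. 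All other steps are immediate citations of Theorem~\ref{derived}, Corollary~\ref{involution}, and the Corollary after Proposition~2.3, so the proof is essentially a three-line chain of implications.

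\begin{proof}
Assume for contradiction that $D^{b}(S)$ and $D^{b}(\widetilde{S})$ are equivalent. Since $(S,\tau)\in\mathfrak{O}_{8,8,1}$, the Remark above shows that $T_{S}$ is a 2-elementary sublattice of $H^{2}(S,\mathbb{Z})$. Hence Theorem \ref{derived} (applied with $S'=\widetilde{S}$) implies that $S$ and $\widetilde{S}$ are isomorphic. As $(S,\tau)\in\mathfrak{O}_{8,8,1}\cap\mathscr{P}(\mathscr{O})$, shrinking $\mathscr{P}(\mathscr{O})$ if necessary we may assume $(\widetilde{S},\widetilde{\tau})\in\mathfrak{O}_{8,8,1}$ as well; then Corollary \ref{involution} forces $(S,\tau)\simeq(\widetilde{S},\widetilde{\tau})$ as 2-elementary K3 surfaces. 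But $(S,\tau)\in\mathscr{P}(\mathscr{O})$ means precisely that $PGL_{3}\cdot(\overline{B_{0}},\overline{\Delta_{0}})\neq PGL_{3}\cdot(\overline{\Delta_{0}},\overline{B_{0}})$, so by the Corollary following Proposition \ref{truc} the pairs $(S,\tau)$ and $(\widetilde{S},\widetilde{\tau})$ are \emph{not} isomorphic, a contradiction. Therefore $D^{b}(S)$ and $D^{b}(\widetilde{S})$ are not equivalent.
\end{proof}
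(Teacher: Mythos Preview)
Your overall strategy coincides with the paper's: assume $D^{b}(S)\sim D^{b}(\widetilde S)$, use that $T_{S}$ is 2-elementary (from $(S,\tau)\in\mathfrak{O}_{8,8,1}$) together with Theorem~\ref{derived} to get $S\simeq\widetilde S$, and derive a contradiction. The paper's proof is shorter at the last step: it simply invokes Corollary~\ref{iso}, which already states that $S$ and $\widetilde S$ are not isomorphic under the given hypotheses. You instead unpack Corollary~\ref{iso} inline, passing through Corollary~\ref{involution} and the corollary after Proposition~\ref{truc}.

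That detour introduces one illegitimate move: the sentence ``shrinking $\mathscr{P}(\mathscr{O})$ if necessary we may assume $(\widetilde{S},\widetilde{\tau})\in\mathfrak{O}_{8,8,1}$'' is not allowed, since the corollary is stated for \emph{all} $(S,\tau)\in\mathfrak{O}_{8,8,1}\cap\mathscr{P}(\mathscr{O})$ and you cannot weaken that hypothesis mid-proof. The clean fix is simply to cite Corollary~\ref{iso} directly, as the paper does, and drop steps~4--5 of your argument entirely. (If you insist on keeping the inline version, the point you flagged can actually be resolved without shrinking: once $S\simeq\widetilde S$, the isomorphism forces $\operatorname{rk}\Pic\widetilde S=\operatorname{rk}\Pic S=8=\operatorname{rk}(\Pic\widetilde S)^{\widetilde\tau}$, hence $\Pic\widetilde S=(\Pic\widetilde S)^{\widetilde\tau}$, which is exactly the condition characterising membership in $\mathfrak{O}_{8,8,1}$. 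But this is more work than needed.)
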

\begin{proof}
Indeed, if $(S,\tau)\in \mathfrak{O}_{8,8,1}$ then $T_{S}$ is a 2-elementary lattice. Then if $D^{b}(S)$ and $D^{b}(\widetilde{S})$ were equivalent, then $S$ and $\widetilde{S}$ would be isomorphic, which is false by Corollary \ref{iso}
\end{proof}

\section{Non-equivalence of dual Relative Compactified Prymians}\label{fin}
We will need the following proposition:
\begin{prop}\label{Hilbert}
Let $S$ and $S'$ be two complex K3 surfaces. If $S^{[2]}$ and $S'^{[2]}$ are bimeromorphic, then $D^{b}(S)\sim D^{b}(S')$.
\end{prop}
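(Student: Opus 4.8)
The plan is to reduce the statement to a lattice-theoretic assertion about transcendental lattices and then invoke Orlov's criterion (Theorem 4.2.4 of \cite{Orlov}), exactly as in the proof of Theorem \ref{derived}. Recall that for K3 surfaces $D^{b}(S)\sim D^{b}(S')$ if and only if there is a Hodge isometry $T_{S}\cong T_{S'}$. So it suffices to produce such a Hodge isometry out of the hypothesis that $S^{[2]}$ and $S'^{[2]}$ are bimeromorphic. First I would recall the description of the weight-two Hodge structure and the Beauville--Bogomolov form on $H^{2}(S^{[2]},\mathbb{Z})$: there is a Hodge isometry $H^{2}(S^{[2]},\mathbb{Z})\cong H^{2}(S,\mathbb{Z})\oplus\mathbb{Z}\delta$ with $\delta^{2}=-2$ and $\delta$ of type $(1,1)$, where $2\delta$ is the class of the exceptional divisor of the Hilbert--Chow morphism. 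In particular the transcendental lattice of $S^{[2]}$, i.e. the minimal primitive sub-Hodge-structure whose complexification contains $H^{2,0}$, is Hodge-isometric to $T_{S}$ (the extra summand $\mathbb{Z}\delta$ is algebraic, so it does not meet the transcendental part).

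Next I would use the fact that a bimeromorphic map between irreducible holomorphic symplectic manifolds induces an isomorphism of weight-two Hodge structures that is an isometry for the Beauville--Bogomolov form; this is standard (it follows because the bimeromorphic map is an isomorphism in codimension one, so pullback of $H^{2}$ is well defined and compatible with the quadratic form, which is a birational invariant). Applying this to the bimeromorphism $S^{[2]}\dashrightarrow S'^{[2]}$ gives a Hodge isometry $H^{2}(S^{[2]},\mathbb{Z})\cong H^{2}(S'^{[2]},\mathbb{Z})$, and restricting it to the transcendental parts yields a Hodge isometry $T_{S^{[2]}}\cong T_{S'^{[2]}}$. Combining with the identification of the previous paragraph, $T_{S}\cong T_{S'}$ as Hodge lattices. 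By Orlov's theorem this gives $D^{b}(S)\sim D^{b}(S')$, which is the claim.

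The main obstacle, such as it is, is making precise and citing correctly the two inputs from the theory of irreducible holomorphic symplectic manifolds: (i) that a bimeromorphic map preserves the integral Hodge structure and the Beauville--Bogomolov form on $H^{2}$, and (ii) the explicit splitting $H^{2}(S^{[2]},\mathbb{Z})\cong H^{2}(S,\mathbb{Z})\oplus\mathbb{Z}\delta$ together with the fact that it identifies the transcendental lattices. Both are classical (the splitting is due to Beauville \cite{Beauville}, and the birational invariance of $H^{2}$ with its form goes back to work of Huybrechts; a convenient reference is \cite{HuybrechtsK3}), so the proof is short. One should just be mildly careful that "transcendental lattice of $S^{[2]}$" is a Hodge-theoretic notion (orthogonal complement of the algebraic classes, or equivalently the smallest primitive sub-Hodge-structure containing $H^{2,0}$), and that under the Beauville splitting it is literally carried to $T_{S}$ because $\delta$ is always algebraic on $S^{[2]}$. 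With these remarks in place the argument is a one-paragraph chain of implications ending in Orlov's criterion.
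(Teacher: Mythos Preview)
Your proposal is correct and follows essentially the same route as the paper: both use Huybrechts' result that a bimeromorphism of irreducible symplectic manifolds induces a Hodge isometry on $H^{2}$ with the Beauville--Bogomolov form, combine it with Beauville's splitting $H^{2}(S^{[2]},\mathbb{Z})\cong H^{2}(S,\mathbb{Z})\oplus\mathbb{Z}\delta$ to identify the transcendental lattices, and then apply Orlov's criterion. The only cosmetic difference is that the paper cites \cite{Huybrechts}, Lemma~2.6, for the birational invariance and writes out explicitly why the transcendental part of $H^{2}(S^{[2]},\mathbb{Z})$ is $i(T_{S})$, whereas you package this as ``$T_{S^{[2]}}\cong T_{S}$''.
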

\begin{proof}
By Lemma 2.6 of \cite{Huybrechts}, if $S^{[2]}$ and $S'^{[2]}$ are birational, there is a Hodge isometry $\Phi$ between $H^{2}(S^{[2]},\mathbb{Z})$ and $H^{2}(S'^{[2]},\mathbb{Z})$, where $H^{2}(S^{[2]},\mathbb{Z})$ and $H^{2}(S'^{[2]},\mathbb{Z})$ are endowed with the Beauville-Bogomolov form.
Moreover, by Section 6 and Lemma 1 Section 9 of \cite{Beauville} we have $$H^{2}(S^{[2]},\mathbb{Z})=i(H^{2}(S,\mathbb{Z}))\oplus^{\bot}\mathbb{Z}\delta_{S},$$
where $i:H^{2}(S,\mathbb{Z})\rightarrow H^{2}(S^{[2]},\mathbb{Z})$ is a Hodge isometry and $\delta_{S}=\frac{1}{2}\Delta_{S}$ with $\Delta_{S}$ the class of the diagonal.
This implies: $$\left\{\left.a\in H^{2}\left(S^{[2]},\mathbb{Z}\right) \right| \ B_{S}(a,i(\eta_{S}))\neq0\right\}=\left\{\left.i(b)\in H^{2}(S,\mathbb{Z})\right|b\in T_{S}\right\},$$
where $\eta_{S}\in H^{0}(S,\omega_{S})\setminus\left\{0\right\}$, $B_{S}$ is the Beauville-Bogomolov form of $H^{2}(S^{[2]},\mathbb{Z})$ and $T_{S}$ is the transcendental lattice of $S$.
We have the same results for $S'$, so $\Phi$ induces a Hodge isometry between $T_{S}$ and $T_{S'}$. Then by Theorem 4.2.4 of \cite{Orlov}, $S$ and $S'$ are derived equivalent.
\end{proof}
We will denote by $\mathcal{P}_{(S,\tau)}$ the relative compactified Prymian built from the pair $(S,\tau)\in\mathscr{P}(\mathfrak{L}/PGL_{3})$, (see Definition \ref{Prym}).
If $(S,\tau)$ and $(S',\tau')$ are two isomorphic 2-elementary K3 surfaces, we have naturally $\mathcal{P}_{(S,\tau)}$ and $\mathcal{P}_{(S',\tau')}$ isomorphic. 
Now, we can prove the following theorem:
\begin{thm}
Let $(S,\tau)\in\mathfrak{O}_{8,8,1}\cap\mathscr{P}(\mathfrak{L}/PGL_{3})$ and $(S',\tau')\in\mathscr{P}(\mathfrak{L}/PGL_{3})$ such that $\mathcal{P}_{(S,\tau)}$ and $\mathcal{P}_{(S',\tau')}$ are isomorphic, then $(S,\tau)$ and $(S',\tau)$ are isomorphic.
\end{thm}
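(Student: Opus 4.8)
The plan is to reduce the statement to Theorem~\ref{derived} through a computation of transcendental lattices. First, an isomorphism $\mathcal{P}_{(S,\tau)}\simeq\mathcal{P}_{(S',\tau')}$ induces a Hodge isometry between the second cohomology lattices equipped with the Beauville--Bogomolov forms, hence a Hodge isometry of the transcendental lattices $T_{\mathcal{P}_{(S,\tau)}}\simeq T_{\mathcal{P}_{(S',\tau')}}$. I would then identify $T_{\mathcal{P}_{(S,\tau)}}$ in terms of $T_S$. By Theorem~\ref{Mukai}, $\mathcal{P}_{(S,\tau)}$ is obtained from the partial resolution $M'$ of $M=S^{[2]}/\iota$ by a Mukai flop along a copy of $\mathbb{P}^2$; a Mukai flop of a fourfold along $\mathbb{P}^2$ is an isomorphism in codimension one, and the blow-up $M'\to M$ only adds one algebraic $(1,1)$-class to $H^2$. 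Hence $T_{\mathcal{P}_{(S,\tau)}}\simeq T_{M'}\simeq T_M$ as Hodge structures with forms.

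Next I would compute $T_M$. Write $\iota=\iota_0\circ\tau$ with $\iota_0$ Beauville's involution, and recall from Section~\ref{Rappels} that $\iota$ is a symplectic involution of $S^{[2]}$, that the induced $\tau$ on $S^{[2]}$ is antisymplectic, and hence that $\iota_0$ is antisymplectic as well. By the proof of Proposition~\ref{Hilbert}, $T_{S^{[2]}}=i(T_S)$, with the Beauville--Bogomolov form restricting to the cup product of $T_S$. The rational Hodge structure $T_S\otimes\mathbb{Q}$ is irreducible (any sub-Hodge-structure avoiding $H^{2,0}$ would consist of rational $(1,1)$-classes, hence of algebraic classes, contradicting $T_S\perp\Pic S$), so any antisymplectic involutive Hodge isometry of $T_{S^{[2]}}$ must be $-\mathrm{id}$; applying this to $\tau^*$ and to $\iota_0^*$ gives $\iota^*=\mathrm{id}$ on $i(T_S)$. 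Therefore $T_M\otimes\mathbb{Q}=(i(T_S)\otimes\mathbb{Q})^{\iota}=i(T_S)\otimes\mathbb{Q}$, and since the quotient map $S^{[2]}\to M$ has degree two it multiplies the form by $2$; after bookkeeping the integral structures near the branch locus this yields $T_M\simeq T_S(2)$. The same holds for $(S',\tau')$, so the Hodge isometry $T_{\mathcal{P}_{(S,\tau)}}\simeq T_{\mathcal{P}_{(S',\tau')}}$ descends to a Hodge isometry $T_S\simeq T_{S'}$ (dividing out the universal factor $2$).

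From here everything is mechanical. By Theorem~4.2.4 of \cite{Orlov}, $T_S\simeq T_{S'}$ gives $D^b(S)\sim D^b(S')$; alternatively one reaches this point by showing first, via the same chain of birational maps, that $S^{[2]}$ and $S'^{[2]}$ are bimeromorphic and invoking Proposition~\ref{Hilbert}. Since $(S,\tau)\in\mathfrak{O}_{8,8,1}$, the lattice $T_S$ is $2$-elementary (Remark after Theorem~\ref{derived}), so Theorem~\ref{derived} gives $S\simeq S'$. Finally, $S\simeq S'$ forces $\Pic S'\simeq\Pic S$ to have rank $8$; as $\Pic(S')^{\tau'}$ is a primitive sublattice of rank $8$ (because $(S',\tau')\in\mathscr{P}(\mathfrak{L}/PGL_3)\subset\mathfrak{M}_{8,8,1}$), it coincides with $\Pic S'$, and since $M^{\bot}=T_{S'}$ then contains no algebraic class the period of $(S',\tau')$ meets none of the relevant hyperplanes, i.e. $(S',\tau')\in\mathfrak{O}_{8,8,1}$. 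Corollary~\ref{involution} then yields $(S,\tau)\simeq(S',\tau')$.

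The main obstacle is the precise integral identification $T_M\simeq T_S(2)$: the rational statement and the action of $\iota$ are easy, but pinning down the integral lattice — checking that no unexpected finite index is contributed by the $28$ singular points of type $(\mathbb{C}^4/\pm1,0)$ or along $\overline{\Sigma}$, and that the scaling constant is genuinely universal across the family — requires a careful analysis of the cohomology of $S^{[2]}/\iota$ and of its partial resolution. The variant using Proposition~\ref{Hilbert} trades this for the equally delicate verification that the \'etale-in-codimension-one double cover $S^{[2]}\to S^{[2]}/\iota$ is canonically attached to $M'$, i.e.\ that $H^1$ of the smooth locus of $M'$ with $\mathbb{Z}/2\mathbb{Z}$-coefficients is as small as expected.
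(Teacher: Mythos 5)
Your endgame (derived equivalence of $S$ and $S'$ via Proposition~\ref{Hilbert}/Orlov, then Theorem~\ref{derived}, then Corollary~\ref{involution}) is exactly the paper's, and the ``alternative'' you mention only in passing --- chase the chain of bimeromorphic maps $\mathcal{P}_{(S,\tau)}\sim M'_{(S,\tau)}\to M_{(S,\tau)}$ up to a bimeromorphic map $S^{[2]}\dashrightarrow S'^{[2]}$ and then apply Proposition~\ref{Hilbert} --- is in fact the paper's entire proof. That route is preferable to your primary one for a concrete reason: it only ever uses the Beauville--Bogomolov form on the smooth manifolds $S^{[2]}$, $S'^{[2]}$, where the integral $H^2$ is known by Beauville's formula, and so it never needs a period lattice for the singular V-manifolds $\mathcal{P}$ or $M'$, nor the identification $T_M\simeq T_S(2)$. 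The lifting issue you flag as ``equally delicate'' is actually harmless: the fixed locus of $\iota$ in $S^{[2]}$ is a surface plus $28$ points, hence of codimension $\geq 2$, so $S^{[2]}\setminus\mathrm{Fix}(\iota)$ is simply connected and is the universal cover of $M\setminus\Sing M$; any bimeromorphic map between the smooth loci of the $M$'s therefore lifts canonically, and the lift extends to $S^{[2]}\dashrightarrow S'^{[2]}$ because the complements removed have codimension $2$ in normal varieties.

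As written, your primary route is not a proof. Orlov's criterion requires an \emph{integral} Hodge isometry $T_S\simeq T_{S'}$, and what you establish (irreducibility of $T_S\otimes\mathbb{Q}$, $\iota^*=\mathrm{id}$ on $i(T_S)$, the factor $2$ from the degree-two quotient) is purely rational; you yourself concede that the integral statement $T_M\simeq T_S(2)$, uniformly in the family and unpolluted by the singular locus, is left unverified. You also assume at the outset that an isomorphism of the singular fourfolds $\mathcal{P}$ induces an isometry of Beauville--Bogomolov lattices, which is additional unjustified machinery. So either supply the integral lattice computation, or promote your fallback route to the main argument. One genuine improvement over the paper: your final paragraph, showing that $S\simeq S'$ together with $(S,\tau)\in\mathfrak{O}_{8,8,1}$ forces $(S',\tau')\in\mathfrak{O}_{8,8,1}$, closes a step the paper silently skips --- Corollary~\ref{involution} needs \emph{both} pairs to lie in $\mathfrak{O}_{8,8,1}$, while the theorem only hypothesizes it for $(S,\tau)$ --- and your argument for it is correct.
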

\begin{proof}
We will denote by $M_{(S,\tau)}$ and $M'_{(S,\tau)}$ the varieties defined in Section \ref{Rappels}, which are the quotients of $S^{[2]}$ by the involution $\iota_{S}$ and the partial resolution of singularities of $M_{(S,\tau)}$ respectively. We denote by $M_{(S',\tau')}$ and $M'_{(S',\tau')}$ the same varieties with $(S',\tau')$ instead of $(S,\tau)$. By Theorem \ref{Mukai}, $M'_{(S,\tau)}$ is bimeromorphic to $\mathcal{P}_{(S,\tau)}$ and $M'_{(S'\tau')}$ is bimeromorphic to $\mathcal{P}_{(S'\tau')}$. Therefore $M'_{(S,\tau)}$ and $M'_{(S'\tau')}$ are bimeromorphic, then $M_{(S,\tau)}$ and $M_{(S',\tau')}$ are bimeromorphic, hence also $M_{(S,\tau)}\setminus \Sing M_{(S,\tau)}$ and $M_{(S',\tau')}\setminus \Sing M_{(S',\tau')}$, so $S^{[2]}$ and $S'^{[2]}$ are bimeromorphic. By Proposition \ref{Hilbert} we have $D^{b}(S)\sim D^{b}(S')$, so by Theorem \ref{derived}, $S$ and $S'$ are isomorphic, and by Corollary \ref{involution} we have $(S,\tau)$ and $(S',\tau')$ isomorphic.
\end{proof}
\begin{cor}
The dense set $\mathfrak{O}_{8,8,1}\cap\mathscr{P}(\mathfrak{L}/PGL_{3})$ of $\mathfrak{M}_{8,8,1}$ provides a 1-to-1 parametrization of the relative compactified Prymians defined in Definition \ref{Prym}, and
the non trivial rational involution on $\mathfrak{M}_{8,8,1}$ defined in Section \ref{isomorphic} induces a non trivial involution on the set of the relative compactified Prymians.
\end{cor}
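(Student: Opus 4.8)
The plan is to derive both assertions of the corollary from the Theorem just proved, together with Corollary~\ref{iso}.

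For the first assertion, I would first recall that the assignment $(S,\tau)\mapsto\mathcal{P}_{(S,\tau)}$ is well defined on isomorphism classes, as noted just before the preceding Theorem: two isomorphic $2$-elementary K3 surfaces give two isomorphic relative compactified Prymians. Restricting this assignment to $\mathfrak{O}_{8,8,1}\cap\mathscr{P}(\mathfrak{L}/PGL_{3})$, the preceding Theorem says exactly that it is injective there: if $(S,\tau)$ lies in this set, $(S',\tau')\in\mathscr{P}(\mathfrak{L}/PGL_{3})$ and $\mathcal{P}_{(S,\tau)}\simeq\mathcal{P}_{(S',\tau')}$, then $(S,\tau)\simeq(S',\tau')$. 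I would also point out that the set $\mathfrak{O}_{8,8,1}\cap\mathscr{P}(\mathfrak{L}/PGL_{3})$ is dense in $\mathfrak{M}_{8,8,1}$: the subset $\mathfrak{O}_{8,8,1}$ is dense by its definition, the genericity conditions of Definition~\ref{gene} cut out a dense open $\mathfrak{L}/PGL_{3}$ in $U/PGL_{3}$, the map $\mathscr{P}$ is the isomorphism of the previous proposition, so $\mathscr{P}(\mathfrak{L}/PGL_{3})$ is dense, and $\mathfrak{M}_{8,8,1}$ is irreducible, so the intersection of two dense subsets is dense. This gives the claimed $1$-to-$1$ parametrization of the Prymians by a dense subset of $\mathfrak{M}_{8,8,1}$.

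For the second assertion, I would transport the rational involution along this parametrization. Write $\theta$ for the rational involution of $\mathfrak{M}_{8,8,1}$ from the corollary of Section~\ref{isomorphic}, induced by $(\overline{B_{0}},\overline{\Delta_{0}})\mapsto(\overline{\Delta_{0}},\overline{B_{0}})$; it is biregular away from the proper closed indeterminacy locus $\mathscr{P}(\mathcal{Q}/PGL_{3})$ and it sends $(S,\tau)$ to $(\widetilde{S},\widetilde{\tau})$. Let $W\subset\mathfrak{M}_{8,8,1}$ be the set of those $(S,\tau)$ lying in $\mathfrak{O}_{8,8,1}\cap\mathscr{P}\bigl(\mathscr{O}\cap(\mathfrak{L}/PGL_{3})\bigr)$ whose image $\theta(S,\tau)$ also lies in $\mathfrak{O}_{8,8,1}\cap\mathscr{P}\bigl(\mathscr{O}\cap(\mathfrak{L}/PGL_{3})\bigr)$; since $\theta$ is biregular off a proper closed subset and all the sets involved are dense open, $W$ is dense, $\theta(W)=W$, and $\theta|_{W}$ is a fixed-point-free involution by the very definition of $\mathscr{O}$. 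For $(S,\tau)\in W$ both $\mathcal{P}_{(S,\tau)}$ and $\mathcal{P}_{\theta(S,\tau)}=\widetilde{\mathcal{P}}$ lie in the image of the injective parametrization of the previous paragraph, so I would define the induced map by $\widehat{\theta}(\mathcal{P}_{(S,\tau)}):=\mathcal{P}_{(\widetilde{S},\widetilde{\tau})}$; injectivity of the parametrization makes this independent of the chosen representative $(S,\tau)$ of $\mathcal{P}_{(S,\tau)}$, and $\theta^{2}=\mathrm{id}$ on $W$ forces $\widehat{\theta}^{2}=\mathrm{id}$. To see that $\widehat{\theta}$ is nontrivial, take any $(S,\tau)\in W$: by Corollary~\ref{iso} the surfaces $S$ and $\widetilde{S}$ are not isomorphic, hence a fortiori $(S,\tau)\not\simeq(\widetilde{S},\widetilde{\tau})$, and since $(\widetilde{S},\widetilde{\tau})\in\mathscr{P}(\mathfrak{L}/PGL_{3})$, the contrapositive of the preceding Theorem (applied with $(S',\tau')=(\widetilde{S},\widetilde{\tau})$) gives $\mathcal{P}_{(S,\tau)}\not\simeq\mathcal{P}_{(\widetilde{S},\widetilde{\tau})}$, that is $\widehat{\theta}(\mathcal{P}_{(S,\tau)})\neq\mathcal{P}_{(S,\tau)}$.

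I expect the only genuine work to be the bookkeeping in the third paragraph: verifying that the rational involution $\theta$, restricted suitably, still preserves a dense subset of $\mathfrak{O}_{8,8,1}\cap\mathscr{P}(\mathfrak{L}/PGL_{3})$, so that $\widehat{\theta}$ is an honest involution on a dense set of Prymians rather than a merely partially defined correspondence. All the substantive ingredients — the preceding Theorem and Corollary~\ref{iso} — are already available, so no further geometric input is needed.
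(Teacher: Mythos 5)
Your proposal is correct and is essentially the argument the paper intends: the corollary is stated without proof precisely because it is the immediate consequence of the preceding Theorem (injectivity of $(S,\tau)\mapsto\mathcal{P}_{(S,\tau)}$ on the given dense set) together with Corollary~\ref{iso} and the rational involution of Section~\ref{isomorphic}, and your write-up just makes the bookkeeping explicit. The only nitpick is the justification of density: ``the intersection of two dense subsets is dense'' is false in general, but it does hold here because $\mathscr{P}\bigl(\mathfrak{L}/PGL_{3}\bigr)$ is dense \emph{open} while $\mathfrak{O}_{8,8,1}$ is dense, so the conclusion stands.
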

\bibliographystyle{amssort}

Departement of Mathematics, Lille1 University, 59655 Villeneuve d'Ascq

E-mail address: \texttt{gregoire.menet@ed.univ-lille1.fr}

\end{document}